\newcommand\F{{\bf F}}
\newcommand\I{{\mathbf{I}}}
\newcommand\E{{\mathbf{E}}}
\renewcommand\P{{\mathbf{P}}}
\newcommand\eps{\varepsilon}
\newcommand\sgn{\operatorname{sgn}}
\newcommand\Per{\operatorname{Per}}
\newcommand\Det{\operatorname{Det}}
\theoremstyle{plain}
  \newtheorem{theorem}[subsection]{Theorem}
  \newtheorem{proposition}[subsection]{Proposition}
  \newtheorem{lemma}[subsection]{Lemma}
  \newtheorem{corollary}[subsection]{Corollary}
\theoremstyle{remark}
  \newtheorem{remark}[subsection]{Remark}
\theoremstyle{definition}
\begin{document}

\title{On the permanent of random Bernoulli matrices}

\author{Terence Tao}
\address{Department of Mathematics, UCLA, Los Angeles CA 90095-1555}
\email{tao@@math.ucla.edu}
\thanks{T. Tao is supported by NSF grant CCF-0649473 and a grant from the MacArthur Foundation.}

\author{Van Vu}
\address{Department of Mathematics, Rutgers University, Piscataway NJ 08854-8019}
\email{vanvu@@math.rutgers.edu}
\thanks{V. Vu  is supported by an NSF Career Grant.}

\subjclass{05D40, 15A15, 15A52}

\begin{abstract}
We show that the permanent of an $n \times n$ matrix with iid
Bernoulli entries $\pm 1$ is of magnitude $n^{(\frac{1}{2}+o(1))n}$
with probability $1-o(1)$. In particular, it is  almost surely
non-zero.
\end{abstract}

\maketitle

\section{Introduction}

Let $M$ be an $n \times n$ matrix. Two basic parameters of $M$ are
its \emph{determinant}

$$ \Det(M) := \sum_{\sigma \in S_n} \sgn(\sigma) \prod_{i=1}^n a_{i\sigma(i)},$$

and its \emph{permanent}

$$ \Per(M) := \sum_{\sigma \in S_n} \prod_{i=1}^n  a_{i\sigma(i)}.$$

Let $M_n$ denote the random Bernoulli matrix of size $n$ (the
entries of $M_n$ are iid random variables taking values $\pm 1$ with
probability $1/2$ each).  For some time, it has been a central
problem in probabilistic combinatorics to determine the asymptotic
behavior of $\Det (M_n)$ and $\Per (M_n)$, as $n$ tends to infinity
(here and later we use the asymptotic notation under this
assumption).

In the 1960s,  Koml\'os  \cite{Kom1, Kom2} proved that asymptotically almost
surely (i.e. with probability $1-o(1)$), $\Det M_{n} \neq 0$. Since then, the problem of estimating
the singular probability $\P(\Det M_{n} =0)$ was studied in many
papers \cite{ Kom3, KKS,TV2, BVW}.  It is easy to see that  $  \P(
\Det(M_n) =0) \ge (1/2+o(1))^{n}$
 and it  has been conjectured that this lower bound is sharp. The most current upper bound is
 $\P( \Det(M_n) = 0 ) \leq (\frac{1}{\sqrt{2}}+o(1))^n$ \cite{BVW}.

The order of magnitude of $\Det M_{n}$ was computed recently.  In
\cite{TV1}, the authors showed that

\begin{equation}\label{detm} \hbox {\it Asymptotically almost surely,}\,\,
|\Det(M_n)| = n^{(1/2-o(1))n}.
\end{equation}

On the other hand, little has been known about the permanent. Prior
to this work, it was not known whether $\Per (M_{n}) \neq 0$ almost
surely. It was  observed by Alon (see also \cite{Wan}) that if $n+1$
is a power of $2$, then any $n\times n$ $\pm 1$ matrix has permanent
equal $(n+1)/2$ modulo $n+1$ and thus is non-zero.

Similar to the situation with $\Det$, the second moment of $\Per$ is
easy to compute, using the definition of permanent and  linearity of
expectation

\begin{equation}\label{variance}
\E |\Per(M_n)|^2 = n!.
\end{equation}

Few higher moments of $\Per(M_n)$ can also be computed (with some
difficulty), but they do not appear to reveal much useful
information.

\vskip2mm

The main goal of this paper is to establish an analogue of
\eqref{detm} for $\Per (M_{n})$.

\begin{theorem} \label{theorem:main} Asymptotically almost surely,

$$|\Per(M_n)| = n^{(\frac{1}{2}+o(1))n}. $$ \end{theorem}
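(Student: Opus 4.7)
The upper bound is immediate from the second-moment identity \eqref{variance} and Markov's inequality: for any $\omega(n) \to \infty$,
$$\P\bigl(|\Per(M_n)|^2 \geq n! \cdot \omega(n)\bigr) \leq 1/\omega(n) = o(1),$$
which gives $|\Per(M_n)| \leq \sqrt{n! \cdot \omega(n)} = n^{(1/2+o(1))n}$ asymptotically almost surely.

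For the matching lower bound I would proceed by induction on $n$. Fix $\eps > 0$, condition on the first $n-1$ rows of $M_n$, and expand the permanent along the last row:
$$\Per(M_n) = \sum_{j=1}^n \xi_j C_j,$$
where $\xi_1, \ldots, \xi_n \in \{\pm 1\}$ are the iid entries of the last row and $C_j$ is the permanent of the $(n-1) \times (n-1)$ submatrix obtained by deleting the $j$-th column. By symmetry each $C_j$ is distributed as $\Per(M_{n-1})$, so the inductive hypothesis gives $|C_j| \geq t := (n-1)^{(1/2-\eps)(n-1)} = n^{(1/2-\eps-o(1))n}$ with probability $1-o(1)$ for each $j$. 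Markov's inequality applied to the count of cofactors failing this bound forces at least $n/2$ of the $|C_j|$'s to exceed $t$ with probability $1-o(1)$, and on that event the Erd\H{o}s--Littlewood--Offord inequality applied to $\sum_j \xi_j C_j$ gives
$$\P\bigl(|\Per(M_n)| \leq t \bigm| \text{first } n-1 \text{ rows}\bigr) = O(1/\sqrt n).$$
Combining these estimates yields $|\Per(M_n)| \geq n^{(1/2-\eps-o(1))n}$ a.a.s., and letting $\eps \to 0$ concludes the argument.

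The principal obstacle to this plan is that the induction does not close naively: the Markov step inflates the failure probability by a multiplicative factor (roughly $2$), producing a recursion of the form $\delta_n \leq 2\delta_{n-1} + O(1/\sqrt n)$ that diverges upon iteration. To fix this I would instead condition on the first $n-2$ rows, so that $\Per(M_n)$ becomes a bilinear form $\sum_{j \neq k} \xi_j \eta_k D_{jk}$ in the last two rows. Provided $D = (D_{jk})$ has Frobenius norm close to $\sqrt{n!}$ with no single entry dominating---properties that follow from a second pass of the inductive hypothesis combined with the $\sqrt{(n-2)!}$ typical scale of an $(n-2)\times(n-2)$ cofactor---anti-concentration for this bilinear form (via a Berry--Esseen estimate, or a quadratic Littlewood--Offord / Hal\'asz-type inequality) can be sharpened from $O(1/\sqrt n)$ to a bound that is polynomial or even sub-exponential in $n$, which is enough to make the inductive recursion close. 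Verifying this non-degeneracy of $D$---that is, that a typical family of $(n-2)\times (n-2)$ cofactor permanents jointly occupies the scales predicted by their second moments---is the delicate technical step I would expect to take the most work.
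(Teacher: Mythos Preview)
Your upper bound is correct and matches the paper's.

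For the lower bound, you correctly diagnose why the one-row-at-a-time induction fails: passing from ``each cofactor is large with probability $1-\delta$'' to ``many cofactors are simultaneously large'' via Markov costs a constant factor in the failure probability, and iterating this $n$ times is fatal. But your proposed fix does not touch this obstacle. Going two rows at a time still requires, as a hypothesis on the $(n-2)\times(n-2)$ cofactors $D_{jk}$, that \emph{many} of them are at the right scale; obtaining this from the inductive hypothesis (which only controls each $D_{jk}$ individually) again invokes Markov and again multiplies the failure probability by a constant. The recursion becomes $\delta_n \le C\,\delta_{n-2} + \epsilon_n$ with $C>1$, which diverges regardless of how small the anti-concentration error $\epsilon_n$ is. Sharpening Littlewood--Offord to a bilinear or quadratic version improves only the additive $\epsilon_n$, not the multiplicative $C$; and in any case bilinear small-ball bounds for Bernoulli forms are polynomial in $n$, not sub-exponential.

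The paper's argument is structurally different. Instead of tracking the probability that a \emph{single} minor is large, it tracks events $E_{k,N,\lambda}=\{\text{at least }N\text{ of the }k\times k\text{ top minors have }|\Per|\ge\lambda\}$ and pushes these from $k=1$ to $k=n$. Because a heavy $k$-minor has many children and the relevant sign variables are independent across children, one can maintain a large population of heavy minors with \emph{exponentially} small failure probability at each step (Propositions~\ref{early} and~\ref{maintain}), which survives $O(n)$ iterations. The growth of $\lambda$ comes from a dichotomy (Proposition~\ref{grow}): a typical $(k{+}1)$-minor either has few heavy parents, forcing the \emph{count} $N$ of heavy $(k{+}1)$-minors to jump by a factor $n^c$, or has many heavy parents, in which case Erd\H{o}s--Littlewood--Offord on the cofactor expansion forces $\lambda$ to jump by $n^{1/2-c}$. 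A submartingale argument shows the second alternative dominates. The ingredient your plan lacks is precisely this simultaneous tracking of a large population of heavy minors; a single-minor induction cannot be closed by improving the anti-concentration input alone.
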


The upper bound follows from \eqref{variance}, Chebyshev's
inequality and the fact that $n!= n^{(1+o(1))n}$. The main task is
to prove the lower bound and we are going to show

\begin{theorem}\label{main-thm}  There is a positive constant $c$ such that for every $\eps > 0$
and $n$ sufficiently large depending on $\eps$, we have

$$ \P( |\Per (M_{n})| \ge n^{(\frac{1}{2}-\eps)n}) \ge  1-n^{-c} .$$

\end{theorem}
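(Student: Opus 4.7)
The natural target $n^{(1/2-\eps)n}$ matches $\sqrt{n!}$, and the identity $\E|\Per(M_n)|^2 = n!$ from \eqref{variance} pins down this order; the task is to upgrade the Paley--Zygmund-type lower bound (which even with a good fourth-moment estimate only gives $|\Per(M_n)|\gtrsim\sqrt{n!}$ with \emph{constant} probability) to probability $1-n^{-c}$. I would attack this via a row-exposure argument built around the vector of ``partial permanents''. For $0\le k\le n$ and $S\subset[n]$ with $|S|=k$, let $P_k(S)$ denote the permanent of the $k\times k$ submatrix of $M_n$ on rows $1,\dots,k$ and columns $S$ (with $P_0(\emptyset):=1$), and let $F_k$ be the $\sigma$-algebra of the first $k$ rows. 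Laplace expansion along row $k$ gives $P_k(S)=\sum_{j\in S}a_{kj}P_{k-1}(S\setminus\{j\})$, and $\Per(M_n)=P_n([n])$. Setting $Q_k:=\sum_{|S|=k}P_k(S)^2$, so $Q_n=|\Per(M_n)|^2$, the identities $a_{ki}^2=1$ and $\E[a_{ki}a_{kj}\mid F_{k-1}]=\delta_{ij}$ give
\[
\E[Q_k\mid F_{k-1}] \;=\; (n-k+1)\,Q_{k-1},
\]
so $M_k:=(n-k)!\,Q_k$ is a nonnegative martingale with $M_0=n!$ and $M_n=|\Per(M_n)|^2$. The theorem will follow once we show $M_n\ge n^{-2\eps n}M_0$ with probability $1-n^{-c}$, i.e.\ that this martingale does not collapse multiplicatively by more than an $n^{-2\eps n}$ factor.

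To control the one-step fluctuation, fix $F_{k-1}$; then $Q_k$ is a quadratic in the Rademacher vector $(a_{k1},\dots,a_{kn})$ whose diagonal part is the deterministic quantity $(n-k+1)Q_{k-1}$ and whose mean-zero part is the second-order Rademacher chaos
\[
\Delta_k \;=\; 2\sum_{i<j}a_{ki}a_{kj}\beta_{ij},\qquad \beta_{ij}:=\sum_{\substack{|S|=k\\ \{i,j\}\subset S}}P_{k-1}(S\setminus\{i\})\,P_{k-1}(S\setminus\{j\}).
\]
Parametrising $S=\{i,j\}\cup T$ and applying Cauchy--Schwarz yields $\beta_{ij}^2\le\alpha_i\alpha_j$, where $\alpha_i:=\sum_{|S'|=k-1,\,i\notin S'}P_{k-1}(S')^2$ satisfies $\sum_i\alpha_i=(n-k+1)Q_{k-1}$; hence $\|(\beta_{ij})\|_F\le(n-k+1)Q_{k-1}$ and the normalised increment $\xi_k:=\Delta_k/((n-k+1)Q_{k-1})$ has conditional variance $O(1)$. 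A Hanson--Wright-type tail bound for order-two Rademacher chaos then controls $\xi_k$; decomposing $\log M_n-\log M_0=\sum_k\log(1+\xi_k)$ and applying a Freedman/Azuma-type martingale concentration to the resulting differences is intended to give $\sum_k\log(1+\xi_k)\ge -o(\eps n\log n)$ with probability $1-n^{-c}$.

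\textbf{Main obstacle.} The Frobenius bound is just strong enough to make the Gaussian piece of Hanson--Wright summable over the $n$ rows, but the sub-exponential piece requires an estimate on the \emph{operator} norm of $(\beta_{ij})$ that Cauchy--Schwarz does not automatically provide. This operator-norm bound degenerates precisely when the vector of minor permanents $(P_{k-1}(S))_{|S|=k-1}$ concentrates its $\ell^2$ mass on a small number of subsets, so the core technical work is to maintain an inductive ``spreading'' property for this vector --- plausibly via a stopping-time argument that aborts if an exceptional non-spread event occurs. A secondary difficulty arises at $k=n$, where the averaging factor $n-k+1=1$ prevents chaos concentration from giving anything useful; in that regime one should instead expose the first $n-1$ rows, write $\Per(M_n)=\sum_j a_{nj}d_j$ with $d_j:=P_{n-1}([n]\setminus\{j\})$ and $\|d\|_2^2=Q_{n-1}$, and apply an Erd\H{o}s--Littlewood--Offord / Nguyen--Vu small-ball inequality for the Rademacher linear form, exploiting the spreading of $d$ inherited from the inductive control of $Q_{n-1}$.
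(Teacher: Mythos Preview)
Your framework is genuinely different from the paper's and the setup is correct: the martingale identity for $Q_k$, the second-order chaos expression for $\Delta_k$, and the Frobenius bound $\|(\beta_{ij})\|_F\le (n-k+1)Q_{k-1}$ all check out. But the ``main obstacle'' you flag is not a side issue --- it is the entire problem, and you have not resolved it. Variance control on $\xi_k$ gives nothing for the lower tail of $\log(1+\xi_k)$: a mean-zero, variance-$O(1)$ random variable supported on $[-1,\infty)$ can place constant mass near $-1$, so without the operator-norm input to Hanson--Wright you cannot rule out $Q_k$ collapsing by a huge factor at a single step, let alone sum $\log(1+\xi_k)$ via Freedman/Azuma (the increments are unbounded below). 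The operator-norm bound in turn requires precisely the ``spreading'' of $(P_{k-1}(S))_S$ that you defer to an unspecified stopping-time argument; and your endgame Littlewood--Offord step likewise needs many of the $d_j$ to be individually large, which is again the same unproven spreading. As written, the proposal is a reformulation that isolates, but does not supply, the missing idea.

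For contrast, the paper never works with the $\ell^2$ aggregate $Q_k$. It tracks a pair $(N_k,\lambda_k)$ meaning ``at least $N_k$ of the $k\times k$ minors have $|\Per|\ge\lambda_k$'', which bakes spreading into the state variable. The key step (Proposition~\ref{grow}) is a combinatorial dichotomy on how many $\lambda$-heavy parents a typical child $A'\in\binom{[n]}{k+1}$ has: if few, there are many distinct children and $N$ grows; if many, Erd\H{o}s--Littlewood--Offord along the cofactor expansion forces $|\Per(M_{A'})|$ up, so $\lambda$ gains a factor $n^{1/2-c}$. A one-sided Azuma argument on the type counts then shows the $\lambda$-growing alternative occurs for almost all $k$. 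The final $\eps n$ rows (Proposition~\ref{endgame}) are handled by a separate, rather delicate construction of many \emph{complement-disjoint} heavy $(n-L)$-minors, precisely to feed Littlewood--Offord at the last row. If you pursue your martingale route, establishing the needed spreading will almost certainly force you into an argument of this combinatorial flavour anyway.
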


\begin{remark} The constant $c > 0$ in Theorem \ref{main-thm} can be made explicit
(e.g. one can take $c=1/10$) but we have not attempted to optimise
it here. In any case, our method does not seem to lead to  any value
of $c$ larger than $1/2$, due to its reliance on the
Erd\H{o}s-Littlewood-Offord inequality (Lemma \ref{lo}) at the very
last step (to get from $(n-1) \times (n-1)$-minors to the $n \times
n$ matrix).  In principle, one can obtain better results by using
more advanced Littlewood-Offord inequalities, but it is not clear to
the authors how to restructure the rest of the argument so that such
inequalities can be exploited.
\end{remark}

\begin{remark} The lower bound $n^{(\frac{1}{2}-\eps)n} $ can probably be sharpened to
$n^{n/2} \exp(-\Theta (n))$, but we do not pursue this direction here.
\end{remark}

\begin{remark} Our proof also works (verbatim) for $\Det$ and thus we obtains a
 new proof for
\eqref{detm}. The lower bound obtained for the determinant  is
however inferior to that in \cite{TV1}.
\end{remark}

\begin{remark} The Bernoulli distribution does not play a
significant role. The theorem holds for virtually any (not too
degenerate) discrete distribution. Also, it is not necessary to
assume that the entries have identical distribution. The
independence is, however, critical.  In particular, our arguments do not seem to easily yield any non-trivial result for the permanent of a random \emph{symmetric} Bernoulli matrix.
\end{remark}

All previous proofs concerning
 $\Det(M_n)$ proceeded by geometric arguments
 (for instance, interpreting $\Det(M_n)=0$ as the event that the rows of $M_n$ lie in a hyperplane).
  Such geometric arguments are unavailable for the permanent and thus one needs to find a new approach.
  In this paper, we proceed   by a  combinatorial method, studying the propagation of probabilistic
   lower bounds for the permanent from small minors to large ones.
   Roughly speaking, we are going to expose the rows of the matrix one at the time and
   try to show that, with high probability, the magnitude of the permanent of many (full-size) minors
   increases by a large factor (close to $\sqrt n$) at every
   step. This can be done in most of the process except the last few
   steps, where we simply  keep the permanents  from dropping.

\vskip2mm

  In the next section, we present our probabilistic tools. The proof
  is outlined in Section \ref{section:mainproof}, modulo many
  propositions. The rest of the paper is devoted to the verification
  of these propositions. As already mentioned, we are going to use the standard asymptotic
  notation ($O,o, \Omega, \Theta$) under the assumption that $n
  \rightarrow \infty$.

\begin{remark}  Random matrices in which the entries are bounded away from zero were studied in \cite{rempala}, \cite{rempala2}.  In this situation there is much less cancellation and a stronger result is known, namely a central limit theorem for the permanent.  For random 0-1 matrices, the problem is closely related to that of counting perfect matchings in a random graph \cite{janson}. We also mention that some general results for the permanent rank of a matrix $A$ (i.e. the size of the largest minor of $A$ with non-vanishing permanent) were established in \cite{Yu}.
\end{remark}

\section{Probabilistic tools}

We shall rely frequently on three standard tools from probability theory.  The first one asserts that if there are a collection of events that are individually likely to be true, then it is likely that most of them are true at once, even if there are strong correlations between such events:

\begin{lemma}[First moment]\label{first}  Let $E_1,\ldots,E_m$ be arbitrary events
(not necessarily independent) such that $\P(E_i) \geq 1-\delta$ for all $1 \leq i \leq m$ and some $\delta > 0$, and let $0 < c < 1$.  Then
$$ \P( \hbox{At most } cm \hbox{ of the } E_1,\ldots,E_m \hbox{ are false} ) \geq 1 - \frac{\delta}{c}.$$
\end{lemma}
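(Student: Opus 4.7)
The plan is a direct application of Markov's inequality to the counting random variable of failures, exploiting the fact that linearity of expectation requires no independence assumption whatsoever.

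Specifically, I would introduce the indicator variables $X_i := \mathbf{1}_{E_i^c}$ and their sum $X := \sum_{i=1}^m X_i$, which counts the number of events among $E_1,\ldots,E_m$ that fail. By linearity of expectation,
$$\E X = \sum_{i=1}^m \P(E_i^c) \leq m\delta,$$
regardless of any dependence structure between the $E_i$'s. Markov's inequality applied to the nonnegative random variable $X$ then yields
$$\P(X \geq cm) \leq \frac{\E X}{cm} \leq \frac{m\delta}{cm} = \frac{\delta}{c}.$$
Passing to the complementary event gives $\P(X < cm) \geq 1 - \delta/c$, and in particular $\P(X \leq cm) \geq 1 - \delta/c$, which is exactly the claimed bound.

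There is essentially no obstacle here; the only point worth emphasizing is that the lemma's power lies precisely in the absence of any independence hypothesis, so the proof must avoid tools like Chernoff-type concentration and rely solely on the linearity/Markov pairing. No further casework or refinement is needed.
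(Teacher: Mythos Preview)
Your proposal is correct and is essentially identical to the paper's own proof: both introduce the sum of indicator variables for the failures $\overline{E_i}$, bound its expectation by $m\delta$ via linearity, and apply Markov's inequality at the threshold $cm$.
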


\begin{proof}  Let $I(E)$ be the indicator of an event $E$.  From Markov's inequality we have
$$ \P( \sum_{i=1}^m I(\overline{E_i}) \geq cm ) \leq \frac{1}{cm} \E \sum_{i=1}^m I(\overline{E_i}),$$
and the claim follows from linearity of expectation.
\end{proof}

Our next tool is the following concentration result, a well known
consequence of Azuma's inequality \cite{AS}.

\begin{lemma} \label{Azuma0} Let $T>0$, let $\xi_{1}, \dots, \xi_{n}$ be iid
Bernoulli variables, and let $Y=Y(\xi_{1}, \dots, \xi_{n})$ be a function such that
$|Y(x)- Y(x')| \le T$ for all pairs $x=(\xi_{1}, \dots, \xi_{n}), x'=(\xi_{1}', \dots, \xi_{n}')$
 of Hamming distance one. Then

$$\P (|Y-\E(Y))| \ge S) \le 2 \exp(-\frac{S^{2}}{2nT^{2}}). $$ \end{lemma}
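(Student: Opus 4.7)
The plan is to exhibit $Y - \E(Y)$ as the terminal value of a Doob martingale with bounded increments, then quote the standard Azuma--Hoeffding martingale concentration inequality.

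First, I would introduce the filtration $\mathcal{F}_i := \sigma(\xi_1,\ldots,\xi_i)$ (with $\mathcal{F}_0$ trivial) and set $Y_i := \E(Y \mid \mathcal{F}_i)$, so that $Y_0 = \E(Y)$, $Y_n = Y$, and $(Y_i)$ is a martingale by the tower property. Telescoping, $Y - \E(Y) = \sum_{i=1}^n D_i$ where $D_i := Y_i - Y_{i-1}$, reducing the task to a tail bound for this martingale sum.

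The only substantive step is to show $|D_i| \le T$ almost surely. Given $\mathcal{F}_{i-1}$, let $Y_i^{(\pm)}$ denote the conditional expectations $\E(Y \mid \mathcal{F}_{i-1}, \xi_i = \pm 1)$. Then $Y_{i-1} = \tfrac{1}{2}(Y_i^{(+)} + Y_i^{(-)})$ while $Y_i \in \{Y_i^{(+)}, Y_i^{(-)}\}$ depending on the value of $\xi_i$, so $|D_i| = \tfrac{1}{2}|Y_i^{(+)} - Y_i^{(-)}|$. But $Y_i^{(+)} - Y_i^{(-)}$ is the conditional expectation (over $\xi_{i+1},\ldots,\xi_n$) of $Y(\xi_1,\ldots,\xi_{i-1},+1,\xi_{i+1},\ldots,\xi_n) - Y(\xi_1,\ldots,\xi_{i-1},-1,\xi_{i+1},\ldots,\xi_n)$; the Hamming-one hypothesis bounds this integrand by $T$ in absolute value pointwise, so $|D_i| \le T/2 \le T$.

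Finally, the standard Azuma--Hoeffding inequality applied to a martingale of length $n$ with increments bounded by $T$ yields $\P(|Y_n - Y_0| \ge S) \le 2\exp(-S^2/(2nT^2))$, which is precisely the asserted bound. The main (and really only) obstacle is the bounded-difference verification above; everything else is a direct invocation of martingale machinery. I anticipate no genuine difficulty, consistent with the paper labeling this a "well known consequence" of Azuma's inequality.
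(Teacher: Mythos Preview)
Your proposal is correct and is exactly the standard Doob-martingale (McDiarmid) argument the paper is alluding to; the paper itself does not supply a proof but simply cites this as a well known consequence of Azuma's inequality in \cite{AS}. Your bounded-difference verification (giving $|D_i|\le T/2$) is in fact slightly sharper than needed, and the final appeal to Azuma--Hoeffding matches the paper's intent precisely.
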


We also need the following (also standard) one-sided version of
Azuma's inequality, which can be proved in the same way as Azuma's
inequality itself.

\begin{lemma}\label{azuma}  Let $\F_0 \subset \F_2 \dots \subset \F_m$ be
a sequence of nested $\sigma$-algebras in a probability space
$\Omega$ and $W_i$, $0\le i \le m$, be $\F_i$-measurable real
functions obeying the submartingale-type property
$$ \E( W_i | \F_{i-1} ) \leq W_{i-1}$$
for all $1 \leq i \leq m$.  Assume also that $|W_i - W_{i-1}| \leq 1$
for all $1 \leq i \leq m$.  Then for any $\lambda \geq 0$ we have
$$ \P( W_m - W_0 \geq \lambda ) \leq \exp( -\frac{\lambda^2 }{ 2m} ).$$
\end{lemma}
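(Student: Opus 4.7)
The plan is to adapt the standard proof of Azuma's inequality (via the exponential moment / Chernoff method), with one small twist to accommodate the supermartingale-type hypothesis $\E(W_i \mid \F_{i-1}) \le W_{i-1}$ rather than an equality.

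First I would set $X_i := W_i - W_{i-1}$, so that $X_i$ is $\F_i$-measurable, $|X_i| \le 1$, and $\E(X_i \mid \F_{i-1}) \le 0$. The target inequality becomes $\P(\sum_{i=1}^m X_i \ge \lambda) \le \exp(-\lambda^2/(2m))$. By Markov's inequality, for any $t \ge 0$,
$$ \P\Big( \sum_{i=1}^m X_i \ge \lambda \Big) \le e^{-t\lambda}\, \E \exp\Big( t \sum_{i=1}^m X_i \Big). $$

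The next step is to control the moment generating function by successive conditioning on $\F_{m-1}, \F_{m-2}, \dots, \F_0$. The key single-variable estimate I would use is: if $X$ is a random variable with $|X| \le 1$ and $\E X \le 0$, then for all $t \ge 0$, $\E e^{tX} \le e^{t^2/2}$. This follows from convexity of $e^{tx}$ on $[-1,1]$, which gives the linear upper bound $e^{tx} \le \cosh(t) + x \sinh(t)$; taking expectations, and using $\sinh(t) \ge 0$ together with $\E X \le 0$, the linear term is non-positive and can be dropped, leaving $\E e^{tX} \le \cosh(t) \le e^{t^2/2}$. Applied conditionally on $\F_{m-1}$ to $X_m$, this yields $\E(e^{tX_m} \mid \F_{m-1}) \le e^{t^2/2}$ almost surely. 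Factoring $e^{t \sum_{i=1}^{m-1} X_i}$ out of the conditional expectation and iterating $m$ times produces $\E \exp(t \sum_{i=1}^m X_i) \le e^{mt^2/2}$.

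Substituting back gives $\P(\sum X_i \ge \lambda) \le \exp(-t\lambda + mt^2/2)$, and optimizing with the choice $t = \lambda/m \ge 0$ yields the advertised bound $\exp(-\lambda^2/(2m))$. The main (mild) obstacle is precisely the relaxation from martingale to supermartingale: one has to keep $t \ge 0$ so that the sign of $\sinh(t)$ points the right way when discarding the conditional mean — if one tried to prove a lower-tail bound for $W_m - W_0$, the sign conventions would reverse and the supermartingale hypothesis would no longer help. Apart from this sign bookkeeping, the argument is the textbook Azuma proof.
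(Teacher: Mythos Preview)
Your proof is correct and is precisely the standard Azuma argument adapted to the one-sided supermartingale setting, which is exactly what the paper has in mind: it does not write out a proof at all, merely remarking that the lemma ``can be proved in the same way as Azuma's inequality itself.'' Your handling of the sign issue (keeping $t \ge 0$ so that the $\sinh(t)\,\E(X\mid \F_{i-1})$ term can be dropped) is the only point requiring care, and you have addressed it correctly.
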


%\begin{proof} We may condition $W_0$ to be constant.  From the convexity of the exponential function we have
%$$e^{tx} \leq \frac{1+x}{2} e^t + \frac{1-x}{2} e^{-t} = \cosh(t) + \sinh(t) x$$
%for $|x| \leq 1$ and $t > 0$.  By comparison of power series we also have $\cosh(t) \leq e^{t^2/2}$.  Applying this with $x := W_i - W_{i-1}$ and taking expectations, we conclude that
%$$\E( e^{tW_i} ) \leq e^{t^2/2} \E( e^{tW_{i-1}} )$$
%for any $1 \leq i \leq m$.  Iterating this we obtain
%$$\E( e^{tW_m} ) \leq e^{m t^2 / 2} e^{tW_0}.$$
%$Setting $t := \lambda/m$ and using Markov's inequality we obtain
%the claim.
%\end{proof}

Finally, we need the classical Littlewood-Offord-Erd\H{o}s
inequality \cite{Erd1}.

\begin{lemma}
\label{lo}  Let $\lambda > 0$ and $m, k \geq 1$, and let
$v_1,\ldots,v_m$ be real numbers such that $|v_i| \geq \lambda$ for
at least $k$ values of $i$.  Let $a_1,\ldots,a_m$ be iid signs drawn
uniformly from $\{-1,+1\}$.  Then we have
$$ \P( |a_1 v_1 + \ldots + a_m v_m| \leq x \lambda ) =O( \frac{x}{\sqrt{k}})$$
for all $x \geq 1$.
\end{lemma}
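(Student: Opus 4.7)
The plan is to reduce the Erd\H{o}s--Littlewood--Offord inequality to the classical antichain argument via Sperner's theorem. First, let $T \subseteq \{1,\ldots,m\}$ be a fixed index set of size $k$ on which $|v_i| \geq \lambda$. Conditioning on the signs $(a_i)_{i \notin T}$, one is left with a shifted sum $\sum_{i \in T} a_i v_i + c$ for some constant $c$ depending on the conditioning. It therefore suffices to establish the estimate
$$ \P\left( \left| \sum_{i \in T} a_i v_i - y \right| \leq x \lambda \right) = O(x/\sqrt{k})$$
uniformly in the shift $y \in \R$. Flipping signs $a_i \mapsto -a_i$ and rescaling, we may further assume $\lambda = 1$ and $v_i \geq 1$ for every $i \in T$.

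Next I would convert the probability into a counting problem on the Boolean lattice $2^T$. Writing $S = \{i \in T : a_i = +1\}$, we have $\sum_{i \in T} a_i v_i = 2 \sum_{i \in S} v_i - V$ where $V := \sum_{i \in T} v_i$, so each sign pattern corresponds to a unique subset $S$ of $T$ and each subset has probability $2^{-k}$. The event in question becomes $\sum_{i \in S} v_i \in J$ for some interval $J$ of length $x$, and the task is to bound the number of such subsets by $O(x \cdot 2^k / \sqrt{k})$.

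The heart of the argument is the following antichain observation: if $J_0$ is a half-open interval of length $1$, then the collection of subsets $S$ with $\sum_{i \in S} v_i \in J_0$ is an antichain in $2^T$, because for $S \subsetneq S'$ one has $\sum_{i \in S'} v_i - \sum_{i \in S} v_i \geq |S' \setminus S| \cdot 1 \geq 1$, precluding both sums from lying in $J_0$. Sperner's theorem then bounds the number of such $S$ by $\binom{k}{\lfloor k/2 \rfloor} = O(2^k/\sqrt{k})$.

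Finally I would split the length-$x$ interval $J$ into $\lceil x \rceil + 1 = O(x)$ half-open intervals of length $1$ (valid since $x \geq 1$), apply the Sperner bound on each piece, sum, and divide by $2^k$ to obtain the claimed $O(x/\sqrt{k})$. The only mildly delicate point is ensuring the half-open decomposition handles boundary cases cleanly so that the antichain property holds on every sub-interval; this is essentially bookkeeping rather than a genuine obstacle.
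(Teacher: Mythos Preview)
Your argument is correct and is precisely the classical Erd\H{o}s proof via Sperner's theorem: condition on the irrelevant coordinates, normalise so that all remaining $v_i \geq 1$, identify sign patterns with subsets of $T$, observe that the subsets whose weighted sum falls in any half-open unit interval form an antichain, and cover the target interval by $O(x)$ such unit intervals. The paper does not supply its own proof of this lemma; it simply records the statement and cites Erd\H{o}s \cite{Erd1}, so there is nothing further to compare.
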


\section{Preliminary reductions} \label{section:mainproof}

Fix a small $\eps_0 >0$.  Our goal is
 to show that

\begin{equation} \label{eqn:goal} \P( |\Per(M_n)| \geq n^{(\frac{1}{2}-\eps_0)n} ) \geq
1-O(n^{-\Omega(1)}),\end{equation} as $n \rightarrow \infty$.

\vskip2mm

We shall do this by first establishing lower bounds on many minors
of $\Per(M_n)$, starting with $1 \times 1$ minors and increasing the
size of the minors one at a time, until reaching the full $n \times
n$ matrix $M_n$.  The main point will be to ensure that lower bounds
on $k \times k$ minors are passed on to many ``children'' $(k+1)
\times (k+1)$ minors, and that the lower bounds improve by almost
$n^{1/2}$ for the majority of  $k$.

\vskip2mm

When we talk about a $k \times k$ minor (of $M_n$), we always
understand that it is formed by some $k$ columns and  the
\emph{first} $k$ rows. Thus, such a
 minor can be indexed by its $k$
columns, which can be identified with an element of $\binom{[n]}{k}
:= \{ A \subset [n] := \{1,\ldots,n\}: |A| = k \}$.
 We use $M_A$ to denote the minor of $M_n$ associated
 to such an element $A \in \binom{[n]}{k}$.
 We also use $M_k$ to denote the $k \times n$ matrix formed by the first $k$ rows of $M_n$,
 thus $M_A$ is completely determined by $M_k$.

Let $1 \leq k \leq n$.  For any $A \in \binom{[n]}{k}$ and $\lambda
> 0$, we say that $A$ is \emph{$\lambda$-heavy} if
$|\Per(M_A)| \geq \lambda$.
 For any $N > 0$, let $E_{k,N,\lambda}$ denote the event that at least $N$
 elements of $ \binom{[n]}{k}$ are $\lambda$-heavy.  For instance, it is clear that
\begin{equation}\label{en1}
 \P( E_{1, n, 1} ) = 1.
\end{equation}
Our objective is to show that
\begin{equation}\label{en2}
 \P( E_{n, 1, n^{(\frac{1}{2}-\eps)n}} ) \geq 1-O(n^{-\Omega(1)}).
\end{equation}

Our strategy will be to move from the $k=1$ bound \eqref{en1} to the
$k=n$ bound \eqref{en2} by ``growing'' $N$ and $\lambda$ for many
values of $k$.

\smallskip

For small values of $k$ (e.g. $k \leq \eps n$, for some small $\eps$ to be chosen later) we will
just use a crude bound that does not grow $N$ or
$\lambda$, but has an exponentially high probability of success:

\begin{proposition}[Maintaining a single large minor]\label{early}  Let $1 \leq k < n$ and $\lambda > 0$.  Then we have
$$ \P( E_{k+1,1,\lambda} | E_{k, 1, \lambda} ) \geq 1 - 2^{-(n-k)}.$$
\end{proposition}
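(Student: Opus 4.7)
The plan is to condition on the first $k$ rows $M_k$ and on the event $E_{k,1,\lambda}$, then use the randomness of the $(k+1)$-th row to produce a $\lambda$-heavy $(k+1) \times (k+1)$ minor. Assuming $E_{k,1,\lambda}$ holds, choose (deterministically from $M_k$, say lexicographically) a set $A \in \binom{[n]}{k}$ with $|\Per(M_A)| \geq \lambda$. The goal is to show that, with conditional probability at least $1 - 2^{-(n-k)}$, there exists some $j \in [n] \setminus A$ such that $A \cup \{j\}$ is $\lambda$-heavy.

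The key step is to expand $\Per(M_{A \cup \{j\}})$ along the $(k+1)$-th row:
$$\Per(M_{A \cup \{j\}}) = a_{k+1,j}\, \Per(M_A) + \sum_{c \in A} a_{k+1,c}\, \Per(M_{(A \setminus \{c\}) \cup \{j\}}).$$
The crucial observation is that the entry $a_{k+1,j}$ appears only in the first term; the sum $R_j := \sum_{c \in A} a_{k+1,c}\, \Per(M_{(A \setminus \{c\}) \cup \{j\}})$ depends only on $M_k$ and on the entries $\{a_{k+1,c} : c \in A\}$. Writing $P_j := \Per(M_{A \cup \{j\}}) = a_{k+1,j}\,\Per(M_A) + R_j$, we see that once we further condition on $\{a_{k+1,c} : c \in A\}$, the value $R_j$ is fixed, while the signs $\{a_{k+1,j} : j \in [n] \setminus A\}$ are iid uniform in $\{-1,+1\}$.

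Because $|\Per(M_A)| \geq \lambda$, at most one of the two values of $a_{k+1,j}$ can force $|P_j| < \lambda$ (the alternative sign would give $|P_j| \geq 2|\Per(M_A)| - \lambda \geq \lambda$ by the triangle inequality). Hence $\P(|P_j| < \lambda \mid M_k, \{a_{k+1,c}\}_{c \in A}) \leq 1/2$ for each $j \notin A$, and by independence across the $n-k$ values of $j$, the probability that every $P_j$ fails is at most $2^{-(n-k)}$. Averaging out the extra conditioning yields $\P(E_{k+1,1,\lambda} \mid E_{k,1,\lambda}) \geq 1 - 2^{-(n-k)}$.

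There is no real obstacle: the argument is driven entirely by the fact that row-expansion isolates the dependence of each candidate permanent $P_j$ on its "new" column entry $a_{k+1,j}$, giving $n-k$ genuinely independent trials each with success probability at least $1/2$. The only point requiring a modest amount of care is ensuring that $A$ is selected measurably from $M_k$ alone, so that the $(k+1)$-th row remains independent of the choice of $A$ and of the $k \times k$ permanents appearing in $R_j$.
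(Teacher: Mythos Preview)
Your proof is correct and follows essentially the same route as the paper: condition on $M_k$, pick a $\lambda$-heavy $A$, expand $\Per(M_{A\cup\{j\}})$ along the new row to isolate the entry $a_{k+1,j}$, and use independence of the $n-k$ entries $\{a_{k+1,j}:j\notin A\}$ (after conditioning on the remaining entries of row $k+1$) to get the bound $2^{-(n-k)}$. The paper packages the ``flip the sign'' step as a separate lemma (Lemma~\ref{lplc}) and the independence step as \eqref{claim1} in Lemma~\ref{lplc2}, but the content is identical to what you wrote inline.
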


\noindent This result is quite easy and is established in Section
\ref{early-sec}.

\smallskip

Proposition \ref{early} does not grow $N$ or $\lambda$.  To handle the intermediate values of
 $k$ (e.g. between $\eps n$ and $(1-\eps) n$) we will need more sophisticated estimates.
  We first need a variant of Proposition \ref{early} in which the number $N$ of minors can be large.

\begin{proposition}[Maintaining many large minors]\label{maintain} Let
$1 \leq k \leq (1-\eps)n$ for some $\eps > 0$, let $N \geq 1$ and
let $\lambda > 0$.  Then we have
$$ \P( E_{k+1,\eps N/6,\lambda} | E_{k, N, \lambda} ) \geq 1 -  \exp( - \Omega(\eps n)  ).$$
\end{proposition}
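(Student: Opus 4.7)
The plan is to expose row $k+1$ and show that, with high probability, a large fraction of the $\lambda$-heavy $k$-subsets each produces many $\lambda$-heavy $(k+1)$-subsets. Fix the first $k$ rows so that $E_{k,N,\lambda}$ holds. For any $\lambda$-heavy $A \in \binom{[n]}{k}$ and any $j \notin A$, permanent expansion along the new row gives
$$\Per(M_{A \cup \{j\}}) = m_{k+1,j}\, \Per(M_A) + Y_{A,j}, \qquad Y_{A,j} := \sum_{i \in A} m_{k+1,i}\, \Per(M_{(A\setminus\{i\})\cup\{j\}}),$$
where $Y_{A,j}$ does not depend on $m_{k+1,j}$. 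Since $|\Per(M_A)| \geq \lambda$, the two possible values $\pm\Per(M_A) + Y_{A,j}$ are separated by $2\lambda$, so at least one has absolute value $\geq \lambda$. Thus, conditional on $(m_{k+1,i})_{i \in A}$, the event that $A \cup \{j\}$ is $\lambda$-heavy is determined by the single bit $m_{k+1,j}$ and has probability at least $1/2$.

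The key observation is that after this conditioning, the events $\{A \cup \{j\} \text{ is heavy}\}$ for different $j \notin A$ are \emph{mutually independent}, since they depend on the disjoint and independent bits $\{m_{k+1,j}\}_{j \notin A}$. A Chernoff bound therefore gives that $G_A := |\{j \notin A : A \cup \{j\} \text{ is heavy}\}|$ satisfies
$$\P\!\left( G_A \geq (n-k)/3 \right) \geq 1 - \exp(-\Omega(n-k)) \geq 1 - \exp(-\Omega(\varepsilon n)),$$
uniformly over $\lambda$-heavy $A$.

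Applying Lemma \ref{first} to the $N$ events $\{G_A \geq (n-k)/3\}$ indexed by the $\lambda$-heavy $A$'s (each with failure probability $\exp(-\Omega(\varepsilon n))$, threshold $c = 1/2$), with probability $\geq 1 - \exp(-\Omega(\varepsilon n))$ at least $N/2$ of them hold. On this event, the number of ``good pairs'' $(A,j)$ with $A$ heavy and $A\cup\{j\}$ heavy is at least $\tfrac{N}{2} \cdot \tfrac{n-k}{3}$. Each $(k+1)$-subset $B$ arises as $A \cup \{j\}$ from at most $k+1$ such pairs (one per $j \in B$), so the number of $\lambda$-heavy $(k+1)$-subsets is at least
$$\frac{N(n-k)}{6(k+1)} \geq \frac{\varepsilon N}{6},$$
using $n-k \geq \varepsilon n$ and $k+1 \leq n$.

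The main subtlety is the conditional-independence step. Without it, the events for different $j$ share all of row $k+1$; a single entry flip of that row can alter $\Omega\!\left(\binom{n-1}{k}\right)$ minors at once, so naive Azuma-type concentration on the total count of heavy $(k+1)$-subsets is hopelessly weak. Conditioning on the $k$ coordinates of row $k+1$ that lie inside $A$ decouples the ``new'' bit $m_{k+1,j}$ and restores independence across $j$, which is what unlocks the exponential tail needed to get $1 - \exp(-\Omega(\varepsilon n))$.
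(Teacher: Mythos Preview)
Your proof is correct and follows essentially the same approach as the paper's: the paper packages your conditional-independence plus Chernoff step into Lemma~\ref{lplc2} (claim~\eqref{claim2}), then applies Lemma~\ref{first} with $c=1/2$ and the same parent--child double count (using the cruder bound of $n$ parents per child instead of your $k+1$). The structure and all the key ideas are identical.
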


We prove Proposition \ref{maintain} in Section \ref{early-sec}.  This proposition has a very small failure rate, but does not improve either $N$ or $\lambda$.  To achieve such growth, we need a further proposition, which has much higher failure rate but has a good chance of increasing either $N$ or $\lambda$ significantly.

\begin{proposition}[Growing many large minors]\label{grow} Let $1 \leq k \leq (1-\eps)n$ for some
$\eps > 0$, let $N \geq 1$, let $1> c > 0$, and let $\lambda > 0$.
Then we can partition the event $E_{k,N,\lambda}$ as
$E'_{k,N,\lambda,c} \vee E''_{k,N,\lambda,c}$, where the events
$E'_{k,N,\lambda,c}, E''_{k,N,\lambda,c}$ depend only on $M_k$, and
where
\begin{equation}\label{egrow}
 \P( E_{k+1,n^c N,\lambda} | E'_{k, N, \lambda,c} ) \geq 1/3
\end{equation}
and
\begin{equation}\label{cgrow}
 \P( E_{k+1,\eps N/4,n^{1/2-c} \lambda} | E''_{k, N, \lambda,c} ) \geq 1 - n^{-c/4}.
\end{equation}
\end{proposition}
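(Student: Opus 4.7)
The plan is to expand the permanent of each $(k+1) \times (k+1)$ minor along the newly exposed row:
\begin{equation*}
\Per(M_A) \;=\; \sum_{i \in A} a_{k+1, i}\, v_i, \qquad v_i := \Per(M_{A \setminus \{i\}}),
\end{equation*}
where the $v_i$ are $M_k$-measurable and the signs $a_{k+1, i}$ are fresh and independent. This is precisely the setting of the Erd\H{o}s--Littlewood--Offord inequality (Lemma~\ref{lo}), and the whole proof boils down to controlling, for each candidate $A$, how many of the coefficients $v_i$ are large in absolute value.

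For $A \in \binom{[n]}{k+1}$ set $d(A) := \#\{i \in A : A \setminus \{i\} \text{ is } \lambda\text{-heavy}\}$ and $T := \{A : d(A) \geq 1\}$; both are $M_k$-measurable. Each $\lambda$-heavy $B \in \binom{[n]}{k}$ contributes $n-k$ to $\sum_A d(A)$, so $\sum_A d(A) \geq N(n-k) \geq \eps n N$ on $E_{k,N,\lambda}$. I propose the dichotomy
\begin{align*}
E'_{k,N,\lambda,c} &:= E_{k,N,\lambda} \cap \{|T| \geq 4 n^c N\}, \\
E''_{k,N,\lambda,c} &:= E_{k,N,\lambda} \cap \{|T| < 4 n^c N\},
\end{align*}
which separates ``many $(k+1)$-subsets contain at least one heavy sub-minor'' from ``few do, hence each such one contains many''.

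On $E'$, for each $A \in T$ pick an index $i$ with $|v_i| \geq \lambda$ and freeze the signs $a_{k+1, j}$ for $j \neq i$; then $\Per(M_A)$ takes the two values $\pm v_i + W$ for a constant $W$, at least one of which has magnitude $\geq |v_i| \geq \lambda$. Hence $\P(|\Per(M_A)| \geq \lambda \mid M_k) \geq 1/2$ for every $A \in T$. If $X$ is the number of $\lambda$-heavy $A \in T$, linearity of expectation gives $\E(X \mid M_k) \geq |T|/2$, and Markov applied to the non-negative variable $|T| - X$ yields $\P(X \geq |T|/4 \mid M_k) \geq 1/3$. Since $|T|/4 \geq n^c N$ on $E'$, this is exactly~\eqref{egrow}.

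On $E''$ I use an averaging argument: if $T' := \{A : d(A) \geq \eps n^{1-c}/8\}$ were smaller than $\eps N/2$, then the $A \notin T'$ would contribute less than $|T| \cdot \eps n^{1-c}/8 < \eps n N / 2$ to $\sum_A d(A)$ while the $A \in T'$ would contribute at most $|T'| \cdot n < \eps n N / 2$, contradicting $\sum_A d(A) \geq \eps n N$. For each $A \in T'$, at least $\eps n^{1-c}/8$ of the $v_i$ satisfy $|v_i| \geq \lambda$, so Lemma~\ref{lo} applied with $x = n^{1/2-c}$ gives $\P(|\Per(M_A)| < n^{1/2-c} \lambda \mid M_k) = O(n^{-c/2})$. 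Applying the first moment lemma (Lemma~\ref{first}) to the events ``$A$ is $n^{1/2-c}\lambda$-heavy'' for $A \in T'$ with threshold $1/2$, at least $|T'|/2 \geq \eps N/4$ are simultaneously heavy with conditional probability $1 - O(n^{-c/2}) \geq 1 - n^{-c/4}$, which is~\eqref{cgrow}. The main obstacle is calibrating the threshold in the dichotomy: it must be large enough that the Markov step on $E'$ still produces $n^c N$ heavy minors with probability $\geq 1/3$, yet small enough that the averaging on $E''$ leaves a positive fraction of the count $\sum_A d(A)$ on high-degree $A$'s; once this constant is fixed, both branches reduce to standard first moment and Littlewood--Offord applications.
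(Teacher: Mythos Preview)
Your proof is correct and follows essentially the same approach as the paper's: both expand along the new row, count for each $A\in\binom{[n]}{k+1}$ the number $d(A)$ of $\lambda$-heavy parents, use the lower bound $\sum_A d(A)\ge \eps n N$, and split into a ``many $A$ with $d(A)\ge 1$'' case (handled by the large-parent/large-child lemma plus a first-moment/Markov step) versus a ``many $A$ with large $d(A)$'' case (handled by Littlewood--Offord plus Lemma~\ref{first}). The only cosmetic difference is the dichotomy criterion: the paper thresholds on $\sum_{l\le K}F_l$ versus $\sum_{l>K}F_l$ with $K=\lfloor\tfrac{\eps}{8}n^{1-c}\rfloor$, whereas you threshold directly on $|T|$; your version is slightly cleaner in that $d(A)$ and $T$ are defined intrinsically rather than relative to a fixed choice of $N$ heavy sets.
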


\noindent This proposition will be proven in Section \ref{grow-sec}.
 Finally, to handle the last few values of $k$ ($(1-\eps) n \leq k \leq n$)
 we need the following result.

\begin{proposition}[Endgame]\label{endgame}  Let $1 \leq k \leq (1-\eps)n$ for some $\eps > 0$, and let $\lambda > 0$.  Then
$$ \P( E_{n,1,n^{-\log n} \lambda} | E_{k,1,\lambda} ) \geq 1 -  n^{-\Omega(1)} $$
if $n$ is sufficiently large depending on $\eps$.
\end{proposition}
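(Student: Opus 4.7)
The plan is to carry out the remaining $n - k \geq \eps n$ row exposures in three phases, combining the earlier propositions with a final Littlewood--Offord argument. For phase (i), I would iterate Proposition \ref{maintain} from level $k$ up to level $n - L$, where $L := \lceil \log^2 n \rceil$, at each step using the parameter $\eps' := L/n$ (valid since $k' \le n - L = (1-\eps')n$). Starting with $N = 1$ and because $\eps' N/6 < 1$, the conclusion $E_{k'+1, \eps' N/6, \lambda}$ collapses to $E_{k'+1, 1, \lambda}$ (the count of heavy minors being an integer), preserving both $N$ and $\lambda$. The per-step failure probability $\exp(-\Omega(L)) = n^{-\omega(1)}$ then sums to $n^{-\omega(1)}$ over at most $n$ steps.

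For phase (ii), I would begin by performing a one-step boost from $E_{n-L, 1, \lambda}$. Fixing the heavy $A \in \binom{[n]}{n-L}$, the Laplace expansion
\begin{equation*}
\Per(M_{A \cup \{j\}}) \;=\; a_{n-L+1, j}\Per(M_A) \;+\; \sum_{i \in A} a_{n-L+1, i}\Per(M_{(A \cup \{j\}) \setminus \{i\}})
\end{equation*}
shows, after conditioning on the signs $(a_{n-L+1, i})_{i \in A}$, that each extension $A \cup \{j\}$ is $\lambda$-heavy with conditional probability $\geq 1/2$, independently over $j \notin A$; a Chernoff bound then yields at least $L/3$ heavy extensions with probability $\geq 1 - \exp(-\Omega(L))$, giving $E_{n-L+1, L/3, \lambda}$. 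I would then iterate Proposition \ref{grow} with a small constant $c_0$ (say $c_0 = 1/10$) through the remaining $L - 2$ levels, using the $n^{-\log n}$ budget in $\lambda$ to absorb the rare failures in the $E'/E''$ split, to reach $E_{n-1, N^*, \mu}$ with $N^* \geq n^{2c}$ and $\mu \geq n^{-\log n}\lambda$, with total failure probability $n^{-\Omega(1)}$.

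For phase (iii), expand $\Per(M_n) = \sum_{j=1}^n a_{nj} \Per(M_{[n]\setminus\{j\}})$ along the last row. Since at least $N^* \geq n^{2c}$ of these coefficients have absolute value $\geq \mu$, Lemma \ref{lo} gives $\P(|\Per(M_n)| \leq \mu) = O(1/\sqrt{N^*}) = O(n^{-c})$, yielding $E_{n, 1, n^{-\log n}\lambda}$ with the required probability.

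The hard part is phase (ii). Near the end of the iteration, Proposition \ref{grow} can only be applied with a small parameter $\eps$, so its $E''$ case shrinks $N$ by a factor of $4/\eps$ that may easily dwarf the $n^{c_0}$ growth available in the $E'$ case (which itself succeeds only with probability $\geq 1/3$). Budgeting the $n^{-\log n}$ slack in $\lambda$ against these losses, and tracking the joint process $(\log N_{k'}, \log \lambda_{k'})$ via a submartingale/Azuma-type argument in the spirit of Lemma \ref{azuma}, is likely to be the most delicate book-keeping in the proof.
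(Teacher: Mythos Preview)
Your phases (i) and (iii), and the one-step boost at the start of phase (ii), are all fine and essentially match parts of the paper's argument. The genuine gap is the heart of phase (ii): iterating Proposition~\ref{grow} through the last $L$ levels cannot maintain $N$ at a polynomial level, and no submartingale bookkeeping on $(\log N_{k'},\log\lambda_{k'})$ will rescue it. At level $n-j$ the hypothesis $k'\le(1-\eps)n$ forces $\eps\le j/n$, so in the $E''$ branch (and in the fallback to Proposition~\ref{maintain} when the $E'$ branch fails) $N$ drops by a factor $\Theta(j/n)$. The only gain, $n^{c_0}$ in the $E'$ success branch, occurs with conditional probability only $\ge 1/3$; hence even when $E'$ holds,
\[
\E\bigl[\log N_{k'+1}-\log N_{k'}\bigr]\ \le\ \tfrac{1}{3}\,c_0\log n\;-\;\tfrac{2}{3}\log\!\frac{n}{j}\ \approx\ \Bigl(\tfrac{c_0}{3}-\tfrac{2}{3}\Bigr)\log n\ <\ 0
\]
for every $c_0<1$ (and near the end $j\le L=o(n)$, so $\log(n/j)\sim\log n$). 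Thus $\log N_{k'}$ is a supermartingale with negative drift, and Azuma gives nothing useful; in the worst (all-$E''$) case $N$ is multiplied by $\prod_{j=1}^{L-1}\Theta(j/n)\ll 1$, regardless of whether $L$ is $\Theta(\log n)$ or $\Theta(\log^2 n)$. The $n^{-\log n}$ slack in $\lambda$ is orthogonal to this: it cannot compensate for losses in $N$.

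The idea you are missing is structural rather than quantitative. The paper does not try to maintain an arbitrary family of heavy minors through the last $L\approx\tfrac{1}{100}\log n$ levels; instead it first arranges, at level $n-L$, a family of $\Theta(n/L)$ heavy minors whose \emph{complements are pairwise disjoint} (this requires a more careful version of your phase~(i), building a heavy minor avoiding a prescribed $2L$-set). With disjoint complements one can pick one child per parent and get \emph{distinct} children with still-disjoint complements, eliminating the double-counting loss entirely. A good/bad dichotomy (few vs.\ many $\lambda'/n$-heavy parents of each child) combined with Lemma~\ref{lo} in the good case and an Azuma argument in the bad case then shows that $N$ shrinks by only a constant factor per step, at the price of one factor of $n$ in $\lambda'$. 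Iterating $L=\Theta(\log n)$ times consumes exactly the $n^{-\log n}$ budget and leaves $N\ge n^{1/2}$ heavy $(n-1)\times(n-1)$ minors for your phase~(iii).
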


\noindent This proposition will be proven in Section
\ref{endgame-sec}.

\vskip2mm

In the rest of this section, we show how Propositions
\ref{early}-\ref{endgame} imply the desired bound \eqref{en2}.

Recall that $\eps_0 > 0$ is fixed.  We choose a number $\eps > 0$ sufficiently small
compared to $\eps_0$, and a number $\eps'$  sufficiently small
compared to $\eps$. Let $k_1 := \lfloor (1-\eps) n \rfloor$.  In
view of Proposition \ref{endgame}, it suffices to show that
\begin{equation}\label{ek1}
\P( E_{k_1, 1, n^{(1/2-\eps_0/2})n} ) \geq 1 - n^{-\Omega(1)} .
\end{equation}

Applying Proposition \eqref{early} repeatedly, combined with
\eqref{en1}, we  obtain
\begin{equation}\label{kn}
 \P( E_{k_0,1,1} ) \geq 1 - \exp( - \Omega( n ) )
\end{equation}
for $k_0 := \lfloor \eps n \rfloor + 1$. (One can also use here
Alon's observation from the introduction, replacing $k_0$ with $2^m-1$ for some suitable $m$. However, this observation is specific to the permanent (as opposed to the determinant).)

\vskip2mm

To get from $k_0$ to $k_1$, we construct random variables
$N_k,\lambda_k$ and $W_k$  for $k_0 \leq k \leq k_1$ by the
following algorithm.

\begin{itemize}
\item Step 0.  Initialise $k:=k_0$. If $E_{k_0, 1,1}$ holds, then set $N_{k_0} := 1,\lambda_{k_0} :=
1, W_{k_0}=0$. Otherwise, set $N_{k_0}:=0, \lambda_{k_0}:=1,
W_{k_0}:=0$.

\item Step 1.  If  $N_k=0$ then set $N_{k+1}:=0$,
$\lambda_{k+1}:=\lambda_k$, $W_{k+1}:=W_k$. Move to Step 5.
  Otherwise,  move on to Step 2.

\item Step 2.  If $k=k_1$ then terminate the algorithm.  Otherwise, move on to Step 3.

\item Step 3.  By Proposition \ref{grow}, we are either in event $E'_{k,N_k,\lambda_k,\eps}$
or $E''_{k,N_k,\lambda_k,\eps}$.  Expose the row $k+1$.

\item Step 4.  Define $N_{k+1}$ and $\lambda_{k+1}$ by the following rule:
\begin{itemize}

\item[(I)] If $E'_{k,N_k,\lambda,\eps} \wedge  E_{k+1,n^{\eps} N_k / 4,\lambda_k}$ holds then we say
that $k$ is \emph{Type I}.  \newline Set $N_{k+1} := n^{\eps} N_k/4$
and $\lambda_{k+1} := \lambda_k$.

\item[(II)]  If $E'_{k,N_k,\lambda,\eps} \wedge \overline{E_{k+1,n^{\eps} N_k / 4,\lambda_k}} \wedge
E_{k+1,\eps N_k/6,\lambda_k}$ holds then we say that $k$ is
\emph{Type II}.  Set $N_{k+1} := \eps' N_k$ and $\lambda_{k+1} :=
\lambda_k$. (Here we use the fact that $\eps' \le \eps/6$.)

\item[(III)] If $E''_{k,N_k,\lambda,\eps} \wedge E_{k+1,\eps'
N_k,n^{1/2-\eps} \lambda_k}$ holds then we say that $k$ is
\emph{Type III}.  Set $N_{k+1} := \eps' N_k, \lambda_{k+1} :=
n^{1/2-\eps} \lambda_k$.

\item[(IV)] If $E''_{k,N_k,\lambda,\eps} \wedge \overline{E_{k+1,\eps' N_k,n^{1/2-\eps} \lambda_k}}
\wedge E_{k+1,\eps N_k/6,\lambda_k}$ holds then we say that $k$ is
\emph{Type IV}.   Set $N_{k+1} := \eps' N_k$ and $\lambda_{k+1} :=
\lambda_k$. (Here we use the fact that $\eps' \le \eps/6$.)

\item[(V)] If none of the above holds then set $N_{k+1}:=0,
\lambda_{k+1}:=\lambda_k$.
\end{itemize}

\vskip2mm  \noindent Set $W_{k+1}:= W_k + (1-\eps/2) - 3\I_{k
\,\,\hbox{type}\,\, I} -\I_{k \,\,\hbox{type} \,\,III}$.

 \item Step 5. Increment $k$ to $k+1$, and then return to Step 1.
\end{itemize}

\vskip2mm

We say that the algorithm is {\it successful} if at the terminating
time ($k=k_1$), $N_{k_1} \neq 0$ and $W_{k_1} \le \eps' n/2$. We
first show

\begin{proposition} The probability that the algorithm is successful is $1-\exp( - \Omega (\eps' n ) )$.
\end{proposition}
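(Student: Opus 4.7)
My plan is to decompose the failure event into two pieces: (i) some step $k \in [k_0, k_1)$ is Type V (so $N_{k+1} := 0$ and the algorithm freezes with $N_{k_1} = 0$), and (ii) no step is Type V but $W_{k_1} > \eps' n/2$ nonetheless. A negligible $\exp(-\Omega(n))$ contribution also comes from the initialization failing $\overline{E_{k_0,1,1}}$, controlled by \eqref{kn}.

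For (i), I would condition on $\F_k$, the $\sigma$-algebra generated by the first $k$ rows. The event $\{N_k > 0\}$ is $\F_k$-measurable, and on it $E_{k,N_k,\lambda_k}$ holds. Inspecting the definitions of Types I--V shows that Type V at step $k$ is implied by $\overline{E_{k+1, \eps N_k/6, \lambda_k}}$: on the $E'$ branch one uses the containment $E_{k+1, n^{\eps} N_k/4, \lambda_k} \subseteq E_{k+1, \eps N_k/6, \lambda_k}$, valid for $n$ large, while on the $E''$ branch this is built into the definitions of Types III and IV. Proposition \ref{maintain}, applied conditionally on $\F_k$, then bounds the per-step conditional probability of Type V by $\exp(-\Omega(\eps n))$, and a union bound over the $O(n)$ steps yields a total contribution of $\exp(-\Omega(\eps n))$, much smaller than the desired $\exp(-\Omega(\eps' n))$.

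For (ii), I would exploit a negative drift in $W_k$. On $E'_{k, N_k, \lambda_k, \eps}$, inequality \eqref{egrow} together with $E_{k+1, n^{\eps} N_k, \lambda_k} \subseteq E_{k+1, n^{\eps} N_k/4, \lambda_k}$ gives $\P(\text{Type I} \mid \F_k) \geq 1/3$. On $E''_{k, N_k, \lambda_k, \eps}$, inequality \eqref{cgrow} together with $\eps' \leq \eps/4$ gives $\P(\text{Type III} \mid \F_k) \geq 1 - n^{-\eps/4}$. Combining, on $\{N_k > 0\}$,
\[
\E[W_{k+1} - W_k \mid \F_k] \leq (1 - \eps/2) - (1 - n^{-\eps/4}) \leq -\eps/3
\]
for $n$ sufficiently large. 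I would then form the compensated process $V_k := W_k + (\eps/3)(k - k_0)$, which (freezing it on $\{N_k = 0\}$, where $W_k$ has already stopped moving) is a supermartingale with $|V_{k+1} - V_k| \leq 4$. Applying Lemma \ref{azuma} to $V_k/4$ over $k_1 - k_0 = \Theta(n)$ steps yields $\P(V_{k_1} - V_{k_0} \geq (\eps/6) n) \leq \exp(-\Omega(\eps^2 n))$. Since $W_{k_0} = 0$ and $W_{k_1} > \eps' n / 2$ together force $V_{k_1} - V_{k_0} \geq (\eps/3)(k_1 - k_0) \geq (\eps/6) n$ once $\eps'$ is sufficiently small compared to $\eps$, branch (ii) contributes $\exp(-\Omega(\eps^2 n)) \leq \exp(-\Omega(\eps' n))$.

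The one point requiring care is the conditioning in (ii): Propositions \ref{maintain} and \ref{grow} are phrased as unconditional probabilities given events like $E_{k,N,\lambda}$, $E'$, or $E''$, but what the drift calculation actually needs are pointwise bounds on $\F_k$. These should be what their proofs naturally produce, since those proofs operate by exposing a new row and analyzing the effect row by row; granting this, the whole argument reduces to the supermartingale tail bound above combined with the Type V union bound.
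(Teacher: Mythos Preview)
Your proof is correct and follows essentially the same approach as the paper: bound the probability that some step is Type~V via Proposition~\ref{maintain} and a union bound, then control $W_{k_1}$ via the one-sided Azuma inequality. The compensation step is unnecessary, however: since you have already shown the drift of $W_k$ is $\leq -\eps/3 \leq 0$, the process $W_k$ is itself a supermartingale, and Lemma~\ref{azuma} applied directly to (a rescaling of) $W_k$ yields $\P(W_{k_1} \geq \eps' n/2) \leq \exp(-\Omega((\eps')^2 n))$, which is exactly what the paper does.
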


\begin{proof}  From \eqref{kn}, we know that the probability of failure at $k=k_0$ is
 $ \exp( - \Omega(n) )$. From Proposition \ref{maintain}, we know
that the probability that $E_{k+1,\eps N_k/6, \lambda_k}$ fails
given $E_{k, N_k, \lambda_k}$,  for any given $k_0 < k \leq k_1$, is
$\exp(-\Omega (\eps n)) \le \exp( - \Omega(\eps' n ))$.  The union
bound then implies that the probability that $N_{k_1}=0$ is
$\exp(-\Omega (\eps' n))$.

From Proposition \ref{grow} and the definition of $W_{k+1}$, we
obtain the submartingale-type property
$$ \E( W_{k+1}| M_k ) \leq W_k.$$
Also we have $|W_{k+1}-W_k| =O(1)$.  By Lemma \ref{azuma} (with the
$\sigma$-  algebra generated by $M_k$ playing the role of $\F_k$),
we obtain
$$ \P( W_{k_1} \geq \eps' n/2 ) \le \exp( - \Omega( \eps' n ) ).$$
The claim follows.\end{proof}

Next, we prove the following (deterministic) proposition,  which,
together with the previous proposition, imply \eqref{ek1}.

\begin{proposition} If the algorithm is successful, then $E_{k_1,1,
n^{(1/2-\eps_0/2)n}}$ holds.
\end{proposition}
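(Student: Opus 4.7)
The plan is to extract, from the two success conditions $N_{k_1}\neq 0$ and $W_{k_1}\le \eps' n/2$, a lower bound on the number $T_{III}$ of Type~III steps, since those are the only steps that multiply $\lambda_k$. Let $T_I, T_{II}, T_{III}, T_{IV}$ count the steps of each type, and set $m := k_1 - k_0 \ge (1-2\eps)n - O(1)$, so that $T_I + T_{II} + T_{III} + T_{IV} = m$. Reading off Step~4 directly gives the explicit product formulas
\[
N_{k_1} = (n^{\eps}/4)^{T_{I}}\,(\eps')^{m - T_{I}}, \qquad \lambda_{k_1} = n^{(1/2-\eps)T_{III}}.
\]

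First I would verify by a routine induction down the algorithm that whenever $N_k > 0$ the event $E_{k, N_k, \lambda_k}$ actually holds: each of Types~I--IV is defined as a conjunction containing this event. Hence success implies $E_{k_1, N_{k_1}, \lambda_{k_1}}$, and so the crude cardinality bound $N_{k_1}\le \binom{n}{k_1}\le 2^n$ is forced. Taking logarithms in the product formula for $N_{k_1}$ and solving linearly then yields the key upper bound
\[
T_{I} \;\le\; \frac{n\log 2 + m\log(1/\eps')}{\eps \log n - \log 4 + \log(1/\eps')} \;=\; O_{\eps,\eps'}\!\left(\frac{n}{\log n}\right) \;=\; o(n).
\]

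Next I would unfold the $W$-condition. Since $W_{k_0}=0$ and $W_{k+1}-W_k = (1-\eps/2)-3\I_{I}-\I_{III}$, the success bound $W_{k_1}\le \eps' n/2$ is simply
\[
3T_I + T_{III} \;\ge\; m(1 - \eps/2) - \eps' n/2.
\]
Combining with the $T_I = o(n)$ estimate gives
\[
T_{III} \;\ge\; (1-2\eps)(1-\eps/2)\,n - \eps' n/2 - o(n) \;\ge\; (1-5\eps/2-\eps'/2)\,n - o(n),
\]
and since $\eps'\le \eps/6$ with $\eps$ small compared to $\eps_0$, this forces $(1/2-\eps)T_{III}\ge (1/2-\eps_0/2)n$ for $n$ large. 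Hence $\lambda_{k_1}\ge n^{(1/2-\eps_0/2)n}$. Because $N_{k_1}>0$, the event $E_{k_1, N_{k_1}, \lambda_{k_1}}$ guarantees at least one $\lambda_{k_1}$-heavy $k_1\times k_1$ minor, so $E_{k_1, 1, n^{(1/2-\eps_0/2)n}}$ holds, as required.

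The only genuine obstacle is the short linear-arithmetic check that the $5\eps/2 + \eps'/2$ loss in the exponent, plus the $o(n)$ remainder from the $T_I$ bound, stays strictly below $\eps_0/2$; this is painless given the declared hierarchy of small parameters. The conceptual point is that $T_I$ cannot be a constant fraction of $n$, because each Type~I step multiplies $N_k$ by the large factor $n^\eps/4$ while $N_k$ is capped by $\binom{n}{k_1}$, which forces almost all of the ``budget'' supplied by the submartingale inequality into Type~III steps.
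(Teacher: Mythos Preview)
Your proof is correct and follows essentially the same approach as the paper: bound the number of Type~I steps by $o(n)$ using the cap $N_k\le 2^n$ together with the multiplicative growth $n^\eps/4$ per Type~I step, then feed this into the telescoped identity for $W_{k_1}$ to force $T_{III}\ge (1-O(\eps))n$ and hence $\lambda_{k_1}\ge n^{(1/2-\eps_0/2)n}$. Your write-up is in fact more explicit than the paper's (which glosses over the $(1-\eps/2)$ factor in the $W$-recursion), and your observation that $E_{k,N_k,\lambda_k}$ holds inductively whenever $N_k>0$, hence $N_{k_1}\le\binom{n}{k_1}$, is exactly the mechanism the paper is implicitly using when it invokes ``$N_k\le 2^n$''.
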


\begin{proof} Assume that the algorithm is successful. We have
$W_{k_1} \le \eps' n/2$, which implies (via the definition of
$W_{k+1}$) that

$$\sum_{i=k_0}^{k_1-1} 3\I_{k \,\,\hbox{type}\,\, I }+ \I_{k \,\,\hbox{type
}\,\,III} \ge (k_1-k_0) - \eps'n . $$

On the other hand, the number of steps of type I is only $o(n)$.
Indeed, each such step increases $N_k$ by a huge factor $n^{\eps}/4$
while any other step decreases $N_k$ by at most a constant factor.
These combined with the fact that $N_k \le 2^n$ for any $k$ yield
the desired bound. Thus, the number of steps of type III is at least

$$(k_1-k_0) -(\eps' + o(1)) n \ge (1- 2\eps-O(1)- (\eps'+o(1)))n \ge (1-3\eps) n$$
thanks to the definition of $k_0, k_1$ and the fact that $\eps$ is
larger than $\eps'$. Since each type III step increases $\lambda_k$
by  $n^{1/2-\eps}$, it follows that

$$\lambda_{k_1} \ge n^{(1/2-\eps)(1-3 \eps)} \ge n^{(1-\eps_0/2)n }
$$
as we set $\eps$ much smaller than $\eps_0$. The proof is
complete. \end{proof}

\begin{remark} The above consideration in fact gives an
exponentially small
 probability bound for \eqref{ek1}. Unfortunately, the argument
used to prove Proposition \ref{endgame} only yields a polynomial
bound, especially in the last step of the argument (dealing with the
bottom row of $M_n$). This  is why the final bound in Theorem
\ref{main-thm} is only polynomial in nature.
\end{remark}

It remains to prove Propositions \ref{early}-\ref{endgame}.  This will be the focus of the remaining sections.

\section{Child and parent minors}\label{early-sec}

To prove Propositions \ref{early}-\ref{endgame}, it is important
to understand the relationship between the permanent of a ``parent''
minor $M_A$ and the permanent of a ``child'' minor $M_{A'}$.
More precisely, we say that $M_{A'}$ is a \emph{child} of $M_A$
(or $M_A$ is a \emph{parent} of $M_{A'}$) if we have $A' = A \cup \{i\}$
for some $i \not \in A$ (or equivalently if $A = A' \backslash \{i\}$ for some $i \in A'$).

Let  $A \in \binom{[n]}{k+1}$ for some $1 \leq k < n$. From the
definition of permanent we have the cofactor expansion

\begin{equation}\label{random}
 \Per(M_A) = \sum_{i \in A} a_{k+1, i} \Per(M_{A \backslash \{i\}}).
\end{equation}
We can draw an easy consequence of this:

\begin{lemma}[Large parent often has large child]\label{lplc}  Let $A \in \binom{[n]}{k}$
for some $1 \leq k < n$, and let $i \not \in A$. Assume that the
submatrix $M_k$ is fixed and we expose the (random) row $k+1$. Then

$$ \P\left( |\Per(M_{A \cup \{i\}})| \geq |\Per(M_A)| \right) \geq \frac{1}{2}.$$
In fact, this bound is still true if we condition on all the entries of the row $k+1$ except for $a_{k+1,i}$.
\end{lemma}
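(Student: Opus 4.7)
The plan is to use the cofactor expansion \eqref{random} applied to the set $A' = A \cup \{i\}$, isolating the special role of the column $i$ that is being added. Writing $P := \Per(M_A)$ (which depends only on $M_k$ and so is a fixed constant once $M_k$ is fixed), and separating the $j=i$ term from the rest of \eqref{random}, we get
\begin{equation*}
\Per(M_{A \cup \{i\}}) = a_{k+1,i}\, P + X,
\end{equation*}
where $X := \sum_{j \in A} a_{k+1, j} \Per(M_{(A \cup \{i\}) \setminus \{j\}})$ is a quantity that depends only on the entries $a_{k+1, j}$ with $j \in A$, and in particular is independent of $a_{k+1, i}$.

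Next, I would condition on $M_k$ and on all the entries of row $k+1$ \emph{except} $a_{k+1, i}$. Under this conditioning, both $P$ and $X$ become deterministic, and the only remaining source of randomness is $a_{k+1,i}$, which is still a uniform $\pm 1$ sign. So it suffices to show that, for any fixed real numbers $P$ and $X$, at least one of the two values $|P + X|$ and $|-P + X|$ is $\geq |P|$.

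This is the heart of the argument, and it follows from the parallelogram identity
\begin{equation*}
|P + X|^2 + |{-P} + X|^2 = 2|P|^2 + 2|X|^2 \geq 2|P|^2,
\end{equation*}
which forces the larger of $|P+X|^2$ and $|{-P}+X|^2$ to be at least $|P|^2$. Hence a uniformly random choice of sign $a_{k+1, i} \in \{-1, +1\}$ lands on the favourable case with probability at least $1/2$ (exactly $1/2$ when the two values differ, and with probability $1$ in the degenerate case $X = 0$). Averaging over the conditioning yields the claimed bound, in the stronger form stated in the lemma.

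There is no real obstacle here: the only thing to be careful about is bookkeeping, namely that $P$ is indeed a function of $M_k$ alone (so unaffected by exposing row $k+1$) and that $X$ is a function only of row $k+1$ entries outside column $i$, so that the conditional argument in the last step is clean.
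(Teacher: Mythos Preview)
Your proof is correct and follows essentially the same approach as the paper: isolate the dependence of $\Per(M_{A\cup\{i\}})$ on the single entry $a_{k+1,i}$ via the cofactor expansion, and then argue that at least one of the two sign choices gives the desired lower bound. The only cosmetic difference is in the final elementary step: the paper observes that the two possible values differ by exactly $2|\Per(M_A)|$ (so by the triangle inequality they cannot both be strictly smaller than $|\Per(M_A)|$), whereas you use the parallelogram identity to reach the same conclusion.
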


\begin{proof} Let $M'_{A \cup
\{i\}}$ denote the same minor as $M_{A \cup \{i\}}$ but with the
sign $a_{k+1,i} \in \{-1,+1\}$ replaced by $-a_{k+1,i}$. From
 \eqref{random}, we have
$$ |\Per(M_{A' \cup \{i\}}) - \Per(M_{A \cup \{i\}})| = 2 |\Per(M_A)|.$$
The claim follows.
\end{proof}

We can amplify this  probability $\frac{1}{2}$ to an exponentially
small probability by exploiting the fact that one parent has many
``independent'' children.

\begin{lemma}[Large parent often has many large children]\label{lplc2}
Let $A \in \binom{[n]}{k}$ for some $1 \leq k < n$, and let $I
\subset [n] \backslash A$. Assume that the submatrix $M_k$ is fixed
and we expose the (random) row $k+1$.   Then
\begin{equation}\label{claim1}
\P( |\Per(M_{A \cup \{i\}})| \geq |\Per(M_A)| \hbox{ for some } i \in I ) \geq 1 - 2^{-|I|}
\end{equation}
and
\begin{equation}\label{claim2}
 \P( |\Per(M_{A \cup \{i\}})| \geq |\Per(M_A)| \hbox{ for at least } |I|/3 \hbox{ values of } i \in I )
 \geq 1 - O( \exp( -\Omega( |I| )) )
\end{equation}
\end{lemma}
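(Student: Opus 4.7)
The plan is to reduce both claims to a simple conditioning argument followed by independence. Starting from the cofactor expansion \eqref{random} applied to $A \cup \{i\}$ (with $i \in I$), I would isolate the term involving $a_{k+1,i}$ and write
$$\Per(M_{A \cup \{i\}}) = a_{k+1,i}\,\Per(M_A) + S_i,\qquad S_i := \sum_{j \in A} a_{k+1,j}\,\Per(M_{(A\cup\{i\})\setminus\{j\}}).$$
The key structural observation is that $S_i$ depends only on $M_k$ (which is fixed) and on the row-$(k{+}1)$ entries $\{a_{k+1,j} : j\in A\}$, while the factor $a_{k+1,i}$ in the exposed term depends only on an entry with column index in $I$. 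Since $I \subset [n]\setminus A$, conditioning on $\mathcal{G}:=\sigma(\{a_{k+1,j}:j\in A\})$ fixes every $S_i$, and leaves the family $\{a_{k+1,i}\}_{i\in I}$ iid uniform on $\{\pm 1\}$ and mutually independent.

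Next I would recover the base probability $\tfrac12$ exactly as in Lemma \ref{lplc}: conditional on $\mathcal{G}$, the event $F_i := \{|\Per(M_{A\cup\{i\}})| \ge |\Per(M_A)|\}$ holds for at least the sign $a_{k+1,i} = \sgn(\Per(M_A))\sgn(S_i)$ (and trivially for both signs when $S_i=0$ or $|S_i|\ge 2|\Per(M_A)|$). Hence $\P(F_i \mid \mathcal{G}) \ge \tfrac12$ for every $i$, and by the independence observation above, the events $(F_i)_{i\in I}$ are conditionally independent given $\mathcal{G}$.

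Both conclusions then fall out of standard Bernoulli tail bounds applied fiberwise and integrated over $\mathcal{G}$. For \eqref{claim1},
$$\P\bigl(\bigcap_{i\in I}\overline{F_i}\,\bigm|\,\mathcal{G}\bigr) = \prod_{i\in I}\P(\overline{F_i}\mid\mathcal{G}) \le 2^{-|I|},$$
so taking expectations gives the first bound. For \eqref{claim2}, let $X := \sum_{i \in I}\I_{F_i}$; then $\E(X\mid\mathcal{G}) \ge |I|/2$ and $X$ is, conditional on $\mathcal{G}$, a sum of independent $\{0,1\}$ variables, so a Chernoff (or Hoeffding) bound yields
$$\P\bigl(X < |I|/3 \bigm| \mathcal{G}\bigr) \le \exp(-\Omega(|I|)),$$
which is the desired estimate after integrating out $\mathcal{G}$.

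There is no real obstacle here beyond the bookkeeping in the cofactor expansion; the only place requiring a moment's care is verifying that the conditioning $\sigma$-algebra $\mathcal{G}$ indeed fixes every $S_i$ simultaneously for all $i\in I$, which is where the hypothesis $I\cap A = \emptyset$ is used essentially. Once that is in place the proof is a clean conditional-independence argument, amplifying the single-child $\tfrac12$ bound of Lemma \ref{lplc} to the claimed exponential-in-$|I|$ guarantees.
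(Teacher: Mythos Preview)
Your proposal is correct and matches the paper's argument essentially line for line: condition on the row-$(k{+}1)$ entries indexed by $A$ (the paper phrases it as ``all entries except those in $I$'', which is the same up to irrelevant columns), invoke Lemma~\ref{lplc} to get $\P(F_i\mid\mathcal{G})\ge\tfrac12$, then use conditional independence of the $a_{k+1,i}$ for $i\in I$ to deduce \eqref{claim1} by a product bound and \eqref{claim2} by Chernoff. Your write-up simply spells out the conditioning and cofactor bookkeeping that the paper leaves implicit.
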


\begin{proof} We further condition on all
entries of the $k+1$ row except for $a_{k+1,i}$ where $i \in I$. The
first claim follows from the previous lemma and independence.
 The second follows from Chernoff's bound. (One can, of course, use
 Azuma's inequality as well.)
\end{proof}

We can now immediately prove Proposition \ref{early}:

\begin{proof}[Proof of Proposition \ref{early}]  Let us condition on
the first $k$ rows $M_k$, and assume that $E_{k,1,\lambda}$ holds,
thus there exists a $\lambda$-heavy $A \in \binom{[n]}{k}$.
Applying \eqref{claim1} with $I := [n] \backslash A$ we conclude that
$$\P( A' \hbox{ is } \lambda-\hbox{heavy for some } A' \in \binom{[n]}{k+1} ) \geq 1 - 2^{-(n-k)}$$
and the claim follows.
\end{proof}

A slightly more sophisticated argument also gives Proposition
\ref{maintain}.

\begin{proof}[Proof of Proposition \ref{maintain}] We may take $N$ to be an integer.
Let us condition on the first $k$ rows $M_k$, and assume that
$E_{k,N,\lambda}$ holds, thus there exist $N$ $\lambda$-heavy minors
$A_1,\ldots,A_N \in \binom{[n]}{k}$. Each $A_j$ has at least $\eps
n$ children $A_j \cup \{i\}$.  Let us call $A_j$ \emph{good} if it
has at least $\eps n/3$ $\lambda$-heavy children $A_j \cup \{i\}$.
By \eqref{claim2}, each $j$ has a probability $1 -  \exp( -\Omega(
\eps n ))$ of being good.  Applying Lemma \ref{first} with $c :=
1/2$, we conclude that with probability $1 -  \exp( -\Omega( \eps n
))$, at least $N/2$ of the $j$ are good.

Let us now suppose that at least $N/2$ of the $j$ are good.
By definition, each good $A_j$ has at least $\eps n/3$ $\lambda$-heavy children $A_j \cup \{i\}$. On the other hand, each child has at most $n$ parents.  By the usual double counting argument, this implies that at least $\eps N/6$ elements in $A' \in \binom{[n]}{k+1}$ are $\lambda$-heavy, and the claim follows.
\end{proof}

\section{Growing large minors}\label{grow-sec}

The purpose of this section is to prove Proposition \ref{grow}.  Fix
$k,\eps,N,c,\lambda$; we may take $N$ to be an integer.  We
condition on  $M_k$ of $M_n$ and assume that $E_{k,N,\lambda}$
holds. Thus we may find distinct $\lambda$-heavy $A_1,\ldots,A_N \in
\binom{[n]}{k}$.

For each $l \geq 1$, let $F_l$ denote the number of $A' \in \binom{[n]}{k+1}$ which have exactly $l$ parents in the set $\{A_1,\ldots,A_N\}$.  Since each $A_j$ has at least $\eps n$ children $A_j \cup \{i\}$, a double counting argument shows
$$ \sum_{l=1}^n l F_l \geq \eps n N.$$
Now set $K := \lfloor \frac{\eps}{8} n^{1-c} \rfloor$.  Since
$$ \sum_{l=1}^n l F_l \leq K ( F_1 + \ldots + F_K ) + n (F_{K+1} + \ldots + F_n)$$
we see that either
\begin{equation}\label{kfk}
F_1 + \ldots + F_K \geq \frac{\eps n N}{2K}
\end{equation}
or
\begin{equation}\label{kfk2}
F_{K+1} + \ldots + F_n \geq \frac{\eps N}{2}.
\end{equation}

We let $E'_{k,N,\lambda,c}$ be the event that \eqref{kfk} (and $E_{k,N,\lambda}$, of course) holds, and $E''_{k,N,\lambda,c}$ be the event that \eqref{kfk} fails but \eqref{kfk2} (and $E_{k,N,\lambda}$) holds.

Suppose first that $E'_{k,N,\lambda,c}$ holds.  Then by \eqref{kfk},
we can find at least $\frac{\eps n N}{2K}$ elements $A'$ in
$\binom{[n]}{k+1}$, each of which has at least one parent in
$\{A_1,\ldots,A_N\}$.  By Lemma \ref{lplc}, each such $A'$ is
$\lambda$-heavy with probability at least $1/2$.  Applying Lemma
\ref{first}, we conclude that with probability at least $1/3$, at
least $\frac{\eps n N}{8K}$ of these $A'$ will be $\lambda$-heavy.
The claim \eqref{egrow} now follows from the choice of $K$.

Now suppose instead that $E''_{k,N,\lambda,c}$ holds.  Then by
\eqref{kfk2}, we can find at least $\frac{\eps N}{2}$ elements $A'$
in $\binom{[n]}{k+1}$, each one of which has at least $K$ parents in
$\{A_1,\ldots,A_N\}$.  By \eqref{random} and Lemma \ref{lo}, we see
that each of these $A'$ is $n^{1/2-c} \lambda$-heavy with
probability $1 - O( n^{1/2-c} / K^{1/2} )$.  Applying Lemma
\ref{first}, we see that with probability $1 - O( n^{1/2-c} /
K^{1/2} )$, at least $\eps N/4$ of the $A'$ will be $n^{1/2-c}
\lambda$-heavy.  The claim \eqref{cgrow} now follows from the choice
of $K$ (and the assumption that $n$ is large).  This concludes the
proof of Proposition \ref{grow}.

\section{The endgame}\label{endgame-sec}

The purpose of this section is to prove Proposition \ref{endgame}.  Fix $k, n,\lambda$.  We
 condition on  $M_k$ and assume that $E_{k,1,\lambda}$ holds,
 thus one of the elements of $ \binom{[n]}{k}$ is $\lambda$-heavy.  By symmetry we may assume without
 loss of generality that $[k]$ is $\lambda$-heavy.  Our task is to show
 that $[n]$ is $n^{-\log n}\lambda$-heavy with probability $1-n^{-\Omega(1)}$.

Set $L := \frac{1}{100} \lfloor \log n \rfloor$.  We first show that
there are plenty of heavy minors in $\binom{[n]}{n-L}$.

\begin{lemma}[Many heavy minors of order $n-L$]\label{heavy-nl}
Let $B \subset \binom{[n] \backslash [k]}{2L}$.  Then with probability
 $1 - \exp(-\Omega(L))$, there exists a $\lambda$-heavy minor $A \in \binom{[n]}{n-L}$ which contains $[n] \backslash B$.
\end{lemma}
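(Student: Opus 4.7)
The plan is to grow the $\lambda$-heavy minor $[k]$ one column at a time, exposing rows $k+1,\dots,n-L$ in sequence and applying Lemma~\ref{lplc2} (first claim) at each step, with a strong preference for columns in $[n]\setminus B$ over columns in $B$. Specifically, at step $k'\to k'+1$ write $I_1 := ([n]\setminus B)\setminus A_{k'}$ and $I_2 := B\setminus A_{k'}$. First apply Lemma~\ref{lplc2} to $I_1$: if some $i\in I_1$ yields a heavy child $A_{k'}\cup\{i\}$, set $A_{k'+1}:=A_{k'}\cup\{i\}$ (a ``good step''); otherwise attempt the same with $I_2$, calling this a ``forced step'' when $|I_1|>0$; if neither attempt produces a heavy child, we abort.

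The strategy succeeds provided (i) no step aborts and (ii) the number $F$ of forced steps is at most $L$. Under these conditions every column of $[n]\setminus B\setminus [k]$ is added via a good step, the remaining $L$ additions come from $B$, and the final minor $A_{n-L}$ is $\lambda$-heavy and contains $[n]\setminus B$, as required.

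For the abort probability, Lemma~\ref{lplc2} applied to $I_1\cup I_2 = [n]\setminus A_{k'}$ gives a per-step bound of $2^{-(n-k')}$; summing over $k'=k,\dots,n-L-1$ yields total abort probability at most $\sum_{s=L+1}^{n-k} 2^{-s}\le 2^{-L}=\exp(-\Omega(L))$.

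The main obstacle is controlling $F$, since in the final stages of the $I_1$-phase $|I_1|$ shrinks to $1$ and the per-step $I_1$-failure probability can be as large as $1/2$, so a naive union bound over steps does not suffice. Grouping the steps into the intervals between successive $I_1$-successes, $F$ is stochastically dominated by a sum $\sum_{a=1}^{n-2L-k} G_a$ of independent geometric random variables, where $G_a$ has failure parameter at most $2^{-(n-2L-k-a+1)}$ (i.e.\ $|I_1|=n-2L-k-a+1$ in the $a$-th interval). Evaluating the moment generating function at $t=\log(3/2)$, the final interval ($|I_1|=1$) contributes a factor of $2$ and the remaining intervals contribute $O(1)$ in total, so $\E[\exp(tF)]=O(1)$; a standard Chernoff bound then gives $\P[F>L]\le O(1)\cdot(2/3)^L=\exp(-\Omega(L))$. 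Combining this with the abort bound yields the claimed failure probability.
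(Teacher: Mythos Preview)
Your proof is correct and follows essentially the same strategy as the paper's: both grow the heavy minor greedily one column at a time with preference for columns outside $B$, bound the ``no heavy child at all'' probability by the same geometric sum $\sum_{k'} 2^{-(n-k')}\le 2^{-L}$, and then control the number of steps that land in $B$. The only difference is bookkeeping for this last part---the paper tracks $W_j:=|([n]\setminus B)\setminus A_j|$ via a two-phase argument (a union bound while $W_j$ stays large, then a supermartingale bound on $2^{W_j}-1$ for the final $\approx L$ steps), whereas your single MGF/Chernoff estimate on the sum of geometric variables reaches the same conclusion in one stroke.
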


\begin{proof}  We construct $A_j \in \binom{[n]}{j}$ for $k \leq j \leq n-L$ by the following algorithm.
\begin{itemize}
\item Step 0.  Initialise $j := k$ and $A_j := [k]$.
\item Step 1.  If there exists $i \in [n] \backslash (B \cup A_j)$ such that $A_j \cup \{i\}$ is $\lambda$-heavy, then choose one of these $i$ arbitrarily, set $A_{j+1} := A_j \cup \{i\}$, and go onto Step 4.  Otherwise, go to Step 2.
\item Step 2.  If there exists $i \in B \backslash A_j$ such that $A_j \cup \{i\}$ is $\lambda$-heavy, then choose one of these $i$ arbitrarily, $A_{j+1} := A_j \cup \{i\}$, and go onto Step 4.  Otherwise, go to Step 3.
\item Step 3.  Choose $i \in [n] \backslash A_j$ arbitrarily, and set $A_{j+1} := A_j \cup \{i\}$.
\item Step 4.  If $j = n-L-1$ then {\tt STOP}.  Otherwise increment $j$ to $j+1$ and return to Step 1.
\end{itemize}

Applying \eqref{claim1} we see that if $A_j$ is $\lambda$-heavy for
some $k \leq j < n-L$, then with probability at least $1 -
2^{-(n-j)}$ $A_j \cup \{i\}$ is $\lambda$-heavy for at least one $i
\in [n] \backslash A_j$. By construction, this implies that
$A_{j+1}$ is $\lambda$-heavy with probability at least $1 -
2^{-(n-j)}$.  By the union bound (and the fact that $A_k$ is
$\lambda$-heavy),
 we thus conclude that with probability $1 - O(2^{-L})$, $A_j$ is $\lambda$-heavy for all $k \leq j \leq n-L$.

Let $W_j := |[n] \backslash (B \cup A_j)|$, thus $W_k = n-k-2L$ and
$\min(W_j-1,0) \leq W_{j+1} \leq W_j$ for all $k \leq j < n-L$.  By
\eqref{claim1}, we see that if $A_j$ is $\lambda$-heavy, and $W_j >
0$ then $W_{j+1} = W_j - 1$ with probability at least $1 -
2^{-W_j}$.  By the union bound, we conclude that $W_{n - \lfloor
2.01 L \rfloor} = \lfloor 2.01 L \rfloor - 2L$ with probability
$1-\exp(-\Omega(L))$. We condition on this event.

For any $n-\lfloor 2.01 L \rfloor \leq j < n-L$, we see from the
previous discussion that if $W_j > 0$, then $W_{j+1} = W_j-1$ with
probability at least $0.4$ (say), and $W_{j-1}=W_j$ otherwise.  From
this we see that
$$ \E( 2^{W_{j+1}} - 1 | W_{n - \lfloor 2.01 L \rfloor} = \lfloor 2.01 L \rfloor - 2L )
\leq \frac{1}{\sqrt{2}} \E( 2^{W_j} - 1 | W_{n - \lfloor 2.01 L \rfloor} = \lfloor 2.01 L \rfloor - 2L )$$
(say) for all $n-\lfloor 2.1 L \rfloor \leq j < n-L$.  Since
$$ \E( 2^{W_{n - \lfloor 2.01 L \rfloor}} - 1 | W_{n - \lfloor 2.01 L \rfloor} =
\lfloor 2.01 L \rfloor - 2L ) \le 2^{0.01 L},$$ we conclude by
iteration that
$$ \E( 2^{W_{n-L}} - 1 | W_{n - \lfloor 2.01 L \rfloor} = \lfloor 2.01 L \rfloor - 2L ) \le \exp( - \Omega(L) )$$
and thus $W_{n-L}=0$ with probability $1-\exp(-\Omega(L))$.  Since
$A_{n-L}$ is also $\lambda$-heavy with probability
$1-\exp(-\Omega(L))$, the claim follows.
\end{proof}

For any integer $N \geq 1$, any $1 \leq j \leq L$, and any $\lambda'
> 0$, let $F_{j,N,\lambda'}$ denote the event that there exists $N$
$\lambda'$-heavy sets (minors) $A_1,\ldots,A_N \in \binom{[n]}{n-j}$
whose complements $[n] \backslash A_1, \ldots, [n] \backslash A_N$
are disjoint.

\begin{corollary}[Many complement-disjoint heavy minors of order $n-L$]\label{coro}
We have $\P( F_{L, \lfloor\eps n/10L\rfloor,\lambda} ) =
1-\exp(-\Omega(L)) = 1 - n^{-\Omega(1)}$.
\end{corollary}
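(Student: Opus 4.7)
The plan is to deduce this corollary from Lemma \ref{heavy-nl} by a ``disjoint replication'' strategy combined with the first moment lemma. Since Lemma \ref{heavy-nl} produces a heavy $(n-L)$-minor whose complement lies in any prescribed $2L$-subset $B \subset [n]\setminus [k]$, the natural move is to apply it in parallel to many pairwise disjoint choices of $B$; disjointness of the blocks then automatically forces the resulting complements to be disjoint, which is exactly what the definition of $F_{L,N,\lambda}$ demands.

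Concretely, I would first partition a subset of $[n]\setminus[k]$ into $M := \lfloor (n-k)/(2L)\rfloor$ pairwise disjoint blocks $B_1,\ldots,B_M$, each of size exactly $2L$. Since Proposition \ref{endgame} assumes $k \leq (1-\eps)n$, we have $n-k \geq \eps n$, hence $M \geq \eps n/(2L) - 1$. For each $m$, let $E_m$ be the event that Lemma \ref{heavy-nl} succeeds on $B_m$, i.e.\ that there exists a $\lambda$-heavy minor $A_m \in \binom{[n]}{n-L}$ containing $[n]\setminus B_m$ (and hence with $[n]\setminus A_m \subset B_m$). Lemma \ref{heavy-nl} gives $\P(E_m) \geq 1 - \exp(-\Omega(L))$ for every $m$.

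The events $E_m$ are far from independent, since all of them are governed by the same rows $k+1,\ldots,n$ of $M_n$; this is where I would appeal to Lemma \ref{first} (which needs no independence). Applying it with $\delta = \exp(-\Omega(L))$ and $c = 1/2$ shows that with probability at least $1 - O(\exp(-\Omega(L)))$ at most $M/2$ of the $E_m$ fail, so at least $M/2$ of them succeed. Collecting the corresponding $A_m$'s, and using that their complements live in pairwise disjoint $B_m$'s, yields at least $M/2 \geq \eps n/(4L) \geq \lfloor \eps n/(10L)\rfloor$ complement-disjoint $\lambda$-heavy minors in $\binom{[n]}{n-L}$, for $n$ sufficiently large. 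Since $L = \Theta(\log n)$, the failure probability $\exp(-\Omega(L))$ is $n^{-\Omega(1)}$, matching the bound claimed.

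I do not anticipate any real obstacle: Lemma \ref{heavy-nl} was already engineered to allow the complement to be steered into any $2L$-subset of $[n]\setminus[k]$, and the disjointness of the blocks handles the complement-disjointness clause of $F_{L,N,\lambda}$ for free. The only book-keeping is the elementary arithmetic $M/2 \geq \lfloor \eps n/(10L)\rfloor$ and the observation that Lemma \ref{first} is content with correlated events.
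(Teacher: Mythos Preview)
Your proposal is correct and matches the paper's proof essentially line for line: choose pairwise disjoint $2L$-blocks inside $[n]\setminus[k]$, apply Lemma~\ref{heavy-nl} to each, and use Lemma~\ref{first} to guarantee that a constant fraction succeed, yielding complement-disjoint heavy minors. The only cosmetic difference is that the paper fixes $\lfloor \eps n/4L\rfloor$ blocks from the outset while you take $M=\lfloor (n-k)/(2L)\rfloor$ and then halve; the arithmetic comes out the same.
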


\begin{proof} Choose $\lfloor \eps n/4L\rfloor$ disjoint sets
$B_1,\ldots,B_{\lfloor \eps n/4L\rfloor} \in \binom{[n]\backslash [k]}{2L}$
 arbitrarily.  For each of these $B_i$, Lemma \ref{heavy-nl}
 shows that with probability $1 - \exp(-\Omega(L))$, there
 exists a heavy $A_i \in \binom{[n]}{n-L}$ with
 $[n] \backslash A_i \subset B_i$ (in particular, the sets $[n] \backslash A_i$ are disjoint).
  The claim now follows from Lemma \ref{first}.
\end{proof}

We now propagate the events $F_{j,N}$ downward from $j=L$ to $j=1$ (accepting some loss in the weight threshold $\lambda'$ and in the population $N$ of heavy minors when doing so) by means of the following lemma.

\begin{lemma}[Many heavy minors of order $n-j$ imply many heavy minors of order $n-j+1$]\label{lemur} Let $1 < j \leq L$, $N \geq n^{0.5}$ (say), and $\lambda' > 0$.  Then
\begin{equation}\label{fjn}
 \P( F_{j-1, \lfloor N/10 \rfloor, \lambda'/n} | F_{j,N,\lambda} ) \geq 1 -  n^{-\Omega(1)} .
\end{equation}
\end{lemma}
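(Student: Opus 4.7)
The plan is to produce, for at least $\lfloor N/10\rfloor$ of the $N$ parents $A_\ell\in\binom{[n]}{n-j}$ witnessing $F_{j,N,\lambda'}$, a child $A_\ell \cup \{x_\ell\}$ with $x_\ell \in B_\ell := [n]\setminus A_\ell$ satisfying $|\Per(M_{A_\ell\cup\{x_\ell\}})|\ge \lambda'/n$.  Since the $B_\ell$ are disjoint and of size $j$, the complements $B_\ell\setminus\{x_\ell\}$ of these children are automatically disjoint of size $j-1$, giving $F_{j-1,\lfloor N/10\rfloor,\lambda'/n}$.  All randomness in the step comes from exposing row $n-j+1$.

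For a single parent $A_\ell$, I would use the cofactor expansion \eqref{random} along row $n-j+1$:
$$ \Per(M_{A_\ell \cup \{x\}}) \;=\; a_{n-j+1,x}\,\Per(M_{A_\ell}) \;+\; \eta_{\ell,x}, \qquad x\in B_\ell,$$
where $\eta_{\ell,x}=\sum_{y\in A_\ell} a_{n-j+1,y}\Per(M_{(A_\ell\setminus\{y\})\cup\{x\}})$ depends only on $M_{n-j}$ and on row $n-j+1$ entries outside $B_\ell$.  Conditioning on $M_{n-j}$ and on all row $n-j+1$ bits outside $B_\ell$ freezes every $\eta_{\ell,x}$, and the two possible values of the permanent then differ by $2|\Per(M_{A_\ell})|\ge 2\lambda'$, so at most one has magnitude $<\lambda'/n$.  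The $j$ random bits $\{a_{n-j+1,x}\}_{x\in B_\ell}$ are independent and each individual child succeeds with conditional probability $\ge 1/2$, so exactly as in Lemma \ref{lplc2} the event $E_\ell := \{\exists x\in B_\ell:\,|\Per(M_{A_\ell\cup\{x\}})|\ge \lambda'/n\}$ satisfies
$$ \P(E_\ell^c \mid \text{row } n-j+1 \text{ bits outside } B_\ell) \;\le\; 2^{-j}$$
pointwise, and in particular $\P(E_\ell^c)\le 2^{-j}$ unconditionally.

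To aggregate, I would apply Lemma \ref{first} together with the pointwise bound above.  Markov on $\sum_\ell \I_{E_\ell^c}$ already gives $\P(\text{at least }\lfloor 9N/10\rfloor\text{ failures})\le O(2^{-j})$, which is $n^{-\Omega(1)}$ once $j\gtrsim \log n$.  For the small $j$ range (down to $j=2$), where a pure Markov bound is only constant, I would fall back on the hypothesis $N\ge n^{1/2}$ via a second-moment/Chebyshev argument: for $\ell\ne\ell'$, conditioning on the bits outside $B_\ell\cup B_{\ell'}$ reduces both $E_\ell$ and $E_{\ell'}$ to functions of the $2j$ bits in $B_\ell\cup B_{\ell'}$, and by applying the pointwise Lemma \ref{lplc2} bound once for $\ell$ and once for $\ell'$ (using that $B_\ell,B_{\ell'}$ are disjoint) one controls $\text{Cov}(\I_{E_\ell^c},\I_{E_{\ell'}^c})$, hence $\Var \sum_\ell \I_{E_\ell^c}$, and Chebyshev closes the argument.

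The main obstacle is precisely this decoupling for small $j$.  Lemma \ref{lplc2} gives a sharp pointwise bound when we condition on the bits outside a \emph{single} $B_\ell$, but $E_\ell$ and $E_{\ell'}$ both depend on bits in $B_\ell$ \emph{and} in $B_{\ell'}$ (through the permanent coefficients appearing in each other's cofactor expansions), so they are not conditionally independent under any clean $\sigma$-algebra.  Establishing a covariance estimate tight enough that Chebyshev, combined with the assumption $N\ge n^{1/2}$, yields the $1-n^{-\Omega(1)}$ bound uniformly for all $j\in(1,L]$ is the delicate technical point of the proof.
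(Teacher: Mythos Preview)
Your approach diverges from the paper's, and the divergence is exactly at the point you flag as an obstacle.  Your Markov bound handles $j$ of order $\log n$ cleanly, but for small $j$ (and the lemma must cover $j=2$) you rely on a second-moment bound that you do not actually establish.  The difficulty is real: after conditioning on the bits outside $B_\ell\cup B_{\ell'}$, the event $E_\ell^c$ forces the $j$ bits in $B_\ell$ to equal a function of the bits in $B_{\ell'}$ (through the $\eta_{\ell,x}$), and symmetrically $E_{\ell'}^c$ forces the bits in $B_{\ell'}$ to equal a function of the bits in $B_\ell$.  The joint event is then a fixed-point condition, and one cannot in general do better than $\P(E_\ell^c\cap E_{\ell'}^c)\le 2^{-j}$, the same as the trivial bound $\P(E_\ell^c)\le 2^{-j}$.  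With $N^2$ pairs and covariance possibly of order $2^{-j}$, Chebyshev gives nothing for $j=O(1)$.  So as written the proposal is a plan with an unfilled gap, not a proof.

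The paper avoids this entirely by a different decomposition.  Rather than searching among the $j$ children of each $A_i$, it fixes \emph{one} child $B_i=A_i\cup\{h_i\}$ per parent (with distinct $h_i$'s) and asks whether that specific $B_i$ is $\lambda'/n$-heavy.  It then splits on how many $\lambda'/n$-heavy parents $B_i$ has, with threshold $T=\lfloor n^{0.1}\rfloor$.  If $B_i$ is \emph{good} (at least $T$ heavy parents), the cofactor expansion \eqref{random} has at least $T$ coefficients of size $\ge\lambda'/n$, and Lemma~\ref{lo} gives $\P(|\Per M_{B_i}|<\lambda'/n)=O(T^{-1/2})$; Lemma~\ref{first} then finishes.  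If $B_i$ is \emph{bad}, a short double-counting on a bipartite graph shows that for at least $N/4$ indices $i$, the column $h_i$ is a heavy-parent column for at most $2T$ of the chosen children.  One then conditions on all row-$(n{-}j{+}1)$ bits outside this index set and applies Azuma (Lemma~\ref{Azuma0}) to $Y=\sum_i \min(|\Per M_{B_i}|/\lambda',1)$: the bad/degree bound caps the Lipschitz constant of $Y$ in each remaining coordinate by $O(T)$, and since $N\ge n^{0.5}$ and $T=n^{0.1}$ one gets concentration with failure probability $\exp(-\Omega(N/T^2))=n^{-\Omega(1)}$.  The good/bad dichotomy is precisely what replaces your missing covariance estimate; it is the key idea you are lacking.
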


\begin{proof} Fix $j, N$.  We condition on $M_{n-j}$ so that
$F_{j,N}$ hold.  Thus we can find $\lambda'$-heavy sets
$A_1,\ldots,A_N \in \binom{[n]}{n-j}$ with disjoint complements,
which we now fix. For each $A_i$, we arbitrarily choose a child $B_i
= A_{i }\cup \{h_{i} \} \in \binom{[n]}{n-j+1}$. By construction,
the $B_1,\ldots,B_N$ also have disjoint complements and the $h_i$
are different.

%having different h_i is the main reason to have disjoint complements.

Let $T := \lfloor n^{0.1} \rfloor$.  Call a child $B_i$ \emph{good}
if it has at least $T$ $\lambda'/n$-heavy parents (of which $A_i$
will be one of them), and \emph{bad} otherwise.  There are two
cases.

\vskip2mm

{\it Case 1: at least half of the $B_i$ are good.} By \eqref{random}
and Lemma \ref{lo}, each $B_i$ has a probability $1 - O(T^{-1/2})$
of being $\lambda'/n$-heavy.  The claim now follows from Lemma
\ref{first}.

\vskip2mm

{\it Case 2: at least half of the $B_i$ are bad.} Let $I$ be the set
of all $i$ such that $B_{i}$ is bad and $H$ be the set of $h_{i}$,
$i \in I$. Draw a bipartite graph $G$ between $I$ and $H$ by
connection $i$ to $h_{{i'}}$ if $B_{i } \backslash \{h_{i'}\}$ is
$\lambda'/n$-heavy. As the $B_{i}$ are bad, each $i \in I$ has
degree at most $T$. By double counting the edges in this graph, we
have

$$\sum_{h \in H} \deg_{h} \le T|I| =T|H|  $$ where $\deg_h$ denotes
the degree of $h$.

Again by a double counting argument, one can easily shows that the
set $I':= \{i| \deg_{h_{i}} \le 2T \}$ is at least $|I|/2 \ge N/4$.
We condition on the entries of the $n-j+1$ row not in the columns
determined by $I'$. For each $i\in I'$, let

$$Y_{i}:= \min \left(\frac{|\Per M_{B_{i}}| }{\lambda'} ,1 \right) $$

\noindent and $Y:=\sum_{i \in I'} Y_{i}$. By Lemma \ref{lplc},
$\E(Y_{i}) \ge 1/2$ since each $B_{i}$ has a $\lambda'$-heavy
parent. Thus, by linearity of expectation, $\E(Y) \ge |I'|/2 \ge
|N/8|$.

Now we estimate the effect of each random entry  $a_{n-j+1,h}$ on
$Y$. If $h \notin B_{i}$, then flipping $a_{n-j+1,h}$ does not
change $Y_{i}$.  If $h \in B_{i}$ and the $(n-j)\times (n-j)$ minor
corresponding to $a_{n-j+1,h}$ is not $\lambda'/n$-heavy, then
flipping $a_{n-j+1,h}$ changes $Y_{i}$ by at most $2/n$. Finally, if
$h \in B_{i}$ and the $(n-j)\times (n-j)$ minor corresponding to
$a_{n-j+1,h}$ is  $\lambda'/n$-heavy, then flipping $a_{n-j+1,h}$
changes $Y_{i}$ by at most $1$. On the other hand, the number of
such $i$ is at most $2T$ by the definition of $I'$. Thus,  flipping
$a_{n-j+1,h}$ changes $Y$ by at most $2T+2 \le 3T$.

\vskip2mm

By Lemma \ref{Azuma0} and the definitions of $N$ and $T$

$$\P(|Y- \E(Y)| \ge |I'|/100) \le 2 \exp\left(-\Omega (\frac{|I'|^{2}}{T^{2} |I'|})\right) = \exp\left(-\Omega (\frac{N}{T^{2}})\right) =
n^{-\Omega (1)}. $$

Since $\E(Y) \ge |I'|/2 \ge N/8$, it follows that $Y \ge N/9$ with
probability $1-n^{-\Omega(1)}$. Finally, notice that if $Y \ge N/9$,
then the definition of $Y$ and  $Y_i$ implies (with room to spare)
that for at least $N/10$ indices $i$, $\frac{|\Per M_{B_{i}}|
}{\lambda'} \ge \frac{1}{n}$. This concludes the proof.
\end{proof}

Iterating Lemma \ref{lemur} $L \le \frac{\log n}{100} $ times
starting with Corollary \ref{coro}, we conclude that
$$
 \P( F_{1, \lfloor n^{0.5} \rfloor, n^{-\log n} \lambda} ) \geq 1 -  n^{-\Omega(1)} .
$$
Now suppose that $F_{1, \lfloor n^{0.5} \rfloor, n^{-\log n}
\lambda}$ holds, thus there are at least $\lfloor n^{0.5}\rfloor$
$n^{-\log n} \lambda$-heavy minors in $\binom{[n]}{n-1}$.  Applying
\eqref{random} and Lemma \ref{lo} we conclude that $[n]$ is
$n^{-\log n} \lambda$-heavy with probability at least $1 - O(
1/\sqrt{\lfloor n^{0.5}\rfloor}) = 1 - n^{-\Omega(1)}$.  This
completes the  proof of Theorem \ref{main-thm}.

\end{document}